\newtheorem{theorem}{Theorem}[section]
\newtheorem{lemma}[theorem]{Lemma}
\theoremstyle{definition}
\newtheorem{definition}[theorem]{Definition}
\newtheorem{example}[theorem]{Example}
\newtheorem{corollary}[theorem]{Corollary}
\theoremstyle{remark}
\newtheorem{remark}[theorem]{Remark}
\numberwithin{equation}{section}
\providecommand{\keywords}[1]{\textbf{\textit{Keywords:}} #1}
\providecommand{\subjclass}[1]{\textbf{\textit{MSC2020:}} #1}
\begin{document}

\nocite{*} 

\title{ New Indefinite Summation Formulas and Some Applications }

\author{Hailu Bikila Yadeta \\ email: \href{mailto:haybik@gmail.com}{haybik@gmail.com} }
  \affil{Salale University, College of Natural Sciences, Department of Mathematics\\ Fiche, Oromia, Ethiopia}
\date{\today}
\maketitle

\noindent
\begin{abstract}
\noindent
In this paper, we introduce a novel indefinite summation $\sum_{t} f(t)$ (or antidifference $\Delta ^{-1}f(t) $ ) formula for any given function $f$. We apply the  indefinite summation formula to calculate a particular solution to a nonhomogeneous linear difference equation of the form
$$ y(x+h)-\lambda y(x)=f(x),\quad  h > 0,\quad  \lambda \neq 0, $$
 and also to solve a  linear difference inequality of the form
$$ y(x+h)-\lambda y(x) \geq 0,\quad  h > 0,\quad  \lambda \neq 0.  $$
Furthermore, we apply the formula to determine a particular solution to a difference equations of the form
 $$ \Phi(E)y(t)=f(t), $$
and in solving a linear difference inequality of the form,
 $$ \Phi(E)y(t)\geq 0, $$
 where $   \Phi(E) $ is some linear difference operator. We show how the antidifference of a function $f$ calculated with the current formula is related to the already existing result and establish the corresponding identity.
\end{abstract}

\noindent\keywords{indefinite summation, difference equation, difference operator, antidifference operator, particular solution, floor function, fractional part function, convolution }\\
\subjclass{Primary  39B22, 44A55 }\\
\subjclass{Secondary 47B39}

\section{Introduction}

 According to the superposition principle  of the linear difference equation, the general solution of a nonhomogeneous  linear differential equation is the sum of the general solutions of the associated homogeneous equation and a particular solution of the nonhomogeneous equation. See,for example, \cite{YM}.  The method of finding the particular solution to a nonhomogenous equation requires  the  knowledge of the antidifference (or the indefinite sum) of the nonhomogeneous term. The method of calculation of the antidifference of a given function $f$  may sometimes  be difficult and may involve infinite summation. There are different textbooks with tabulated results for  antidifference of some common functions. See, for example, \cite{LB}. The main purpose of this paper includes the following list:
 \begin{itemize}
   \item To present a new formula for an indefinite summation (or antidifference) of a given function. The current formula is shorter than the ones appearing in the literatures in the sense that it yields the antidifference of a given function $f$ that can be calculated at each point $t$ with a summation with a number of terms not greater than $\lfloor  t \rfloor$ and not an infinite number of terms.
   \item  We apply the current method to the solution of linear difference inequalities as well.
   \item We apply the current result to derive some identities.
   \item We apply the current result to higher order difference operator.
   \item In an analogy to the nonhomogeneous differential equations, where we use the convolution or Green function method, we apply a discrete convolution to a nonhomogeneous difference equations.
   \end{itemize}

\subsection{Periodic and antiperiodic functions}
\begin{definition}
 Let $\mathcal{F} $ represent the space of all real-valued function of real domain. That is
\begin{equation}\label{eq:realvaluedfunction}
   \mathcal{F}:= \{ f: \mathbb{R}\rightarrow \mathbb{R }\}.
\end{equation}
  Let us  denote by $ \mathbb{P}_p $, the space of all periodic functions of period $p$,
  \begin{equation}\label{eq:periodicspace}
   \mathbb{P}_p= \{ f\in \mathcal{F}: f(x+p)=f(x) \},
  \end{equation}
and by $ \mathbb{AP}_p $  the space of all antiperiodic functions of antiperiod $p$.
  \begin{equation}\label{eq:antiperiodicspace}
    \mathbb{AP}_p= \{ f\in \mathcal{F}: f(x+p)= -f(x) \}.
  \end{equation}
\end{definition}
Antiperiodic function with antiperiod $p$ are also periodic function with period $2p$. Periodic and antiperiodic functions are important in the  general solution of linear difference equations with continuous arguments. They  play similar role as the arbitrary consonants play in the indefinite integrals  or the general solution of linear differential equations.
\subsection{The anti difference  operator}
 \begin{definition}
  If $ \Delta F(t) = f(t)  $, then $F(t)$ is called the \emph{antidifference}  of $f(t)$ and is written as $F(t)=  \Delta^{-1}f(t) $.
\end{definition}

Some important properties of difference and antidifferences
\begin{itemize}
  \item $\Delta \Delta ^{-1}f(t)= f(t)$
  \item $ \Delta ^{-1}\Delta f(t)= f(t)+\mu(x), \mu \in \mathbb{P}_1 $.
  \item $\Delta ^{-1} ( \alpha f(t)+\beta g(t))= \alpha \Delta ^{-1}f(t)+\beta \Delta ^{-1}g(t), \alpha, \beta \in\mathbb{ R} $.
  \item If $ \Delta ^{-1}f(t)= F(t),  $ then  $ \Delta ^{-1}f(t+c )= F(t+c), c \in\mathbb{ R } $
  \item If $ \Delta ^{-1}f(t)= F(t)  $ then $ F(t+1)-F(t)= f(t)$.
  \item $ \Delta f(t)\equiv 0  $ if and only if $ f  \in \mathbb{P}_1  $.
    \item Summation by parts:   $ \Delta ^{-1}(u \Delta v) = uv- \Delta ^{-1}(Ev \Delta u )  $.
\end{itemize}
For these and further properties of antidifferences see, for example, \cite{RM},\cite{LB} \cite{CHR},\cite{KM},\cite{SG} \cite{MT}.
\subsection{Some of the available indefinite summation formulas}
There are some antidfference formulas available  for reference and applications. Most of them involve infinite summation, powers of difference operators, derivatives of arbitrary order. Some of the formulas require that the function  whose antidifference is required vanish at infin, some require that the function whose antidifference is required be analytic in some region, and more. Here we look at the most commonly available ones.
\begin{itemize}
  \item \textbf{ Laplace sum}:
   \begin{equation}\label{eq:Laplacesum}
     \sum_{x} f(x) = \int_{0}^{x}f(t)dt -\sum_{k=1}^{\infty}\frac{b_k }{k!}\Delta^k f(x)+c,
   \end{equation}
   where
   $$ b_k= \int_{0}^{1}  \frac{\Gamma (x+1)}{\Gamma( x-k+1)} dx = \int_{0}^{1} (x)_k dx, $$
    are Bernoull's numbers of the second kind, also called  Cauchy numbers of the first kind. See, \cite{GF}, pp. 248, \cite{HG}, pp. 192, \cite{WP}
  \item  \textbf{Newton's formula}:
   \begin{equation}\label{eq:Newtonsfomula}
     \sum_{x} f(x) = \sum_{k=1}^{\infty}\binom{x}{k} \Delta^{k-1}[f](0)+C =\sum_{k=1}^{\infty}\frac{\Delta^{k-1}}{k!}(x)_k+ C
   \end{equation}
   where $(x)_k= \frac{\Gamma (x+1)}{\Gamma (x-k+1)}$ is the falling factorial. See \cite{MT}.
  \item\textbf{ Faulhaber's formula}:
   \begin{equation}\label{eq:Faulhabersformula}
     \sum_{x} f(x) = \sum_{n=1}^{\infty} \frac{f^{n-1}(0)}{n!}B_n(x)+ C
   \end{equation}
   Faulhaber's formula provides that the right-hand side of the equation converges. See \cite{WP}.

  \item   \textbf{Muller's formula}:
If $\lim_{x\rightarrow \infty }f(x)=0$, then
\begin{equation}\label{eq:Mullersformula}
  \sum_{x} f(x)= \sum_{n=0}^{\infty}(f(x)-f(n+x))+C
\end{equation}
See \cite{MS}.
 \item  \textbf{Euler-Maclauren's formula}
\begin{equation}\label{eq:EulerMaclaurensformula}
   \sum_{x} f(x)= \int_{0}^{x}f(t)dt-\frac{1}{2}f(x)+\sum_{k=1}^{\infty}\frac{B_{2k}}{(2k)!}f^{(2k-1)}(x)+C,
\end{equation}
where $B_k$ is the $k$th Bernoulli's number. See \cite{WP}.
\end{itemize}
%

\subsection{Falling factorials and antidifference of polynomials}

\begin{definition}
  The falling factorial
  \begin{equation}\label{eq:fallingfactorial}
    (t)_{n}:= t(t-1)(t-2)...(t-n+1)= \prod_{k=1}^{n}(t-k+1)
  \end{equation}

   The rising factorial
  \begin{equation}\label{eq:risingfactorial}
    t^{(n)}:= t(t+1)(t+2)...(t+n-1)= \prod_{k=1}^{n}(t+k-1)
    \end{equation}
\end{definition}

Powers of  can be written as the linear combinations of falling factorials  as
\begin{equation}\label{eq:ttoninfallingfactorials}
   t^n = \sum_{k=0}^{n} S(n,k)(t)_{k},
\end{equation}
where $S(n,k)$ are Stirling numbers of the second kind. Alternatively, $t^n $ can be written as the linear combinations of rising factorials  as:
 \begin{equation}\label{eq:ttoninrisingfactorials}
   t^n = \sum_{k=0}^{n} (-1)^{n-k}S(n,k)t^{(k)} .
 \end{equation}
The descending  factorials $(t)_{n}$ play a role analogues to that of $t^n $ in differential calculus. That is,
\begin{equation}\label{eq:differenceoffallingfact}
     \Delta (t)_{n} = n (t)_{n-1},
\end{equation}
\begin{equation}\label{eq:antidiffrenceoffallingfact}
 \Delta^{-1}  (t)_{n}=  \frac{(t)_{n+1}}{n+1},
\end{equation}
in analogy to
\begin{equation}\label{derivativexton}
   \frac{d}{dt}t^n = nt^{n-1},
\end{equation}
\begin{equation}\label{eq:integratinzton}
  \int t^n dt = \frac{t^{n+1}}{n+1}.
\end{equation}

But $\Delta ^{-1} t^n $  and $\frac{t^{n+1}}{n+1}$  are not the same.  By linearity of the antidifference operator and by (\ref{eq:ttoninfallingfactorials}),
\begin{equation}\label{eq:antidifferenceofnthpoweroft}
  \Delta^{-1} t^n = \sum_{k=0}^{n}  \frac{  S(n,k) }{n+1}(t)_{k+1}.
\end{equation}
More venereally,  for a polynomial $p_m$ of degree $m$,
$$p_m(t)= \sum_{n=0}^{m} a_n t^n= \sum_{n=0}^{m}\sum_{k=0}^{n} a_n S(n,k) (t)_k $$
We have the antidifference of a polynomial given by
$$ \Delta^{-1} p_m(t)=\sum_{n=0}^{m}\sum_{k=0}^{n} a_n S(n,k)\frac{(t)_{k+1} }{k+1} .  $$
We can calculate the antidifference of a rising factorial as follows:
\begin{equation}\label{eq:antidiffrenceofrisingfactorial}
  \Delta ^{-1}t^{(n)}=  \Delta ^{-1}(E^{n-1}(t)_{n})= E^{n-1}(\Delta ^{-1}(t)_{n})= E^{n-1}\frac{(t)_{n+1}}{n+1}=\frac{(t+n-1)_{n+1}}{n+1}
\end{equation}
Taking into account the relation between the backward difference and the forward difference operators 
\begin{equation}\label{eq:backwardforwardrelation}
  \nabla =I-E^{-1}= E^{-1}( E-I)= E^{-1} \Delta, \quad \nabla^{-1}=  E \Delta^{-1},
\end{equation}
we have the following additional relations:
\begin{equation}\label{eq:backwardantidifferenceoftsubn}
  \nabla^{-1} (t^{(n)})=  E \Delta^{-1}(t^{(n)})= E^n \frac{(t)_{n+1}}{n+1}= \frac{(t+n)_{n+1}}{n+1},
\end{equation}

\begin{equation}\label{eq:backwardantidifferenceoftton}
  \nabla^{-1} (t_{n})=  E \Delta^{-1}(t_{n})= E \frac{(t)_{n+1}}{n+1}= \frac{(t+1)_{n+1}}{n+1}
\end{equation}

\begin{definition}
  The digamma function denoted by $ \Psi $ is the logarithmic derivative of the gamma function:
  $$  \Psi(z):= \frac{d}{dz} [\ln (\Gamma (z))]= \frac{\Gamma'(z)}{\Gamma (z)} $$
\end{definition}
 We have the following
 \begin{equation}\label{eq:theantiddiferenceof1overx}
   \Delta \Psi (t)= \frac{1}{t},\quad \Delta^{-1} \frac{1}{t}= \Psi (t)
 \end{equation}

\begin{theorem}[\textbf{Fundamental Theorem of discrete calculus}]
  Let $F(x)= \Delta^{-1}f(x)$. Then
  $$  \sum_{k=m}^{n} f(k)=\Delta^{-1}f(x)\left.\right|_m^{n+1}=F(m+1)-F(m). $$
\end{theorem}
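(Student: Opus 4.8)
The plan is to prove this via the discrete analogue of the fundamental theorem of calculus, namely a telescoping argument. The only ingredient I would use is the property already listed in the excerpt: if $F(t) = \Delta^{-1} f(t)$, then $\Delta F(t) = F(t+1) - F(t) = f(t)$. This identity lets me replace every summand $f(k)$ by a difference of consecutive values of $F$.

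First I would substitute $f(k) = F(k+1) - F(k)$ into the sum, obtaining
$$\sum_{k=m}^{n} f(k) = \sum_{k=m}^{n}\bigl(F(k+1) - F(k)\bigr).$$
Next I would exploit the telescoping structure of the right-hand side: for each index $k$ with $m \le k \le n-1$, the positive term $F(k+1)$ is cancelled by the negative term $-F(k+1)$ contributed by the index $k+1$. After all interior cancellations, only the lowest negative term $-F(m)$ and the highest positive term $F(n+1)$ remain, so the sum collapses to $F(n+1) - F(m)$. This is exactly the meaning of the evaluation bracket, $\Delta^{-1}f(x)\big|_m^{n+1} = F(n+1) - F(m)$, and the appearance of $n+1$ rather than $n$ in the upper limit is forced by the unit shift built into $\Delta F(t) = F(t+1) - F(t)$.

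There is no serious obstacle here; the whole content is the cancellation, and the only point needing care is the index bookkeeping at the two endpoints. If one wishes to avoid the informal ellipsis in the telescoping, the cleanest rigorous route is a short induction on $n$: the base case $n = m$ is just the defining relation $f(m) = F(m+1) - F(m)$, and the inductive step follows by adding $f(n+1) = F(n+2) - F(n+1)$ to both sides of the induction hypothesis. Either way the result follows immediately.
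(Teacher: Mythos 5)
Your proof is correct, and in fact the paper offers no proof of this theorem at all: it is stated as standard background (immediately followed by the $\sum k^2$ example), so your telescoping argument supplies the argument the paper omits. The telescoping substitution $f(k)=F(k+1)-F(k)$, the interior cancellation, and the fallback induction on $n$ for full rigor are exactly the canonical route, and nothing more is needed. One point worth flagging: the theorem as printed contains a typo --- it asserts the sum equals $F(m+1)-F(m)$, whereas the evaluation bracket $\Delta^{-1}f(x)\bigl.\bigr|_m^{n+1}$ and the example that follows (evaluation of $\frac{x(x-1)(2x-1)}{6}$ between $1$ and $n+1$) make clear the intended right-hand side is $F(n+1)-F(m)$, which is precisely what your argument derives. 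So your proposal not only proves the statement but implicitly corrects it; if this were incorporated into the paper, the displayed conclusion should read
$$\sum_{k=m}^{n} f(k)=\Delta^{-1}f(x)\left.\right|_m^{n+1}=F(n+1)-F(m).$$
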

\begin{example}
  With $f(x)=x^2= x(x-1)+x $, we have
   $$\Delta^{-1}x^2 =\Delta^{-1}x(x-1)+\Delta^{-1} x= \frac{x(x-1)(x-2)}{3}+\frac{x(x-1)}{2}=\frac{x(x-1)(2x-1)}{6} .$$
   Accordingly by fundamental theorem of discrete calculus we have
   $$\sum_{k=1}^{n}k^2=  \left. \frac{x(x-1)(2x-1)}{6}\right\vert^{n+1}_1= \frac{n(n+1)(2n+1)}{6}.$$
\end{example}

\section{Main Results }
\subsection{Particular solution of the linear difference equations  }
 \begin{theorem}\label{eq:Eyeopener1}
 Consider the linear difference equation of the first order
  \begin{equation}\label{eq:1stordnonhomogeneous}
   y(t+1)-\lambda y(t)= f(t),\quad \lambda \neq 0
 \end{equation}
 Then the particular solution of (\ref{eq:1stordnonhomogeneous}) is given by
 \begin{equation}\label{eq:ps1storder}
   y_p(t)=  \sum_{s=1}^{\lfloor t\rfloor }\lambda^{s-1} f(t-s)
 \end{equation}
 \end{theorem}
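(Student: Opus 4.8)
The plan is to prove this by direct substitution: I would verify that the proposed $y_p(t)$ satisfies the difference equation \eqref{eq:1stordnonhomogeneous} identically in $t$. The one structural fact I need about the floor function is the elementary identity $\lfloor t+1\rfloor = \lfloor t\rfloor + 1$, valid for every real $t$; this is exactly what lets the upper summation limit increment cleanly when the argument is advanced by one unit, and it is the hinge on which the whole computation turns.

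First I would write out $y_p(t+1)$ from the formula, obtaining
$$y_p(t+1) = \sum_{s=1}^{\lfloor t\rfloor + 1} \lambda^{s-1} f(t+1-s).$$
Then I would reindex with $r = s-1$, so that the sum runs from $r=0$ to $r=\lfloor t\rfloor$ and the summand becomes $\lambda^{r} f(t-r)$. Peeling off the $r=0$ term isolates $f(t)$ and leaves the tail $\sum_{r=1}^{\lfloor t\rfloor} \lambda^{r} f(t-r)$. On the other hand, multiplying the formula for $y_p(t)$ by $\lambda$ gives exactly $\lambda y_p(t) = \sum_{s=1}^{\lfloor t\rfloor} \lambda^{s} f(t-s)$, which is the same tail sum under the dummy name $s$. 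Subtracting, the tails cancel and I am left with $y_p(t+1) - \lambda y_p(t) = f(t)$, as required.

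I would also record where the formula comes from, since that makes the result less mysterious than a bare verification. Unrolling the recurrence $y(t+1) = \lambda y(t) + f(t)$ downward $k$ times yields $y(t) = \lambda^{k} y(t-k) + \sum_{s=1}^{k} \lambda^{s-1} f(t-s)$; choosing $k = \lfloor t\rfloor$ pushes the argument of the homogeneous term down to the fractional part $\{t\} = t - \lfloor t\rfloor \in [0,1)$, and declaring this base value to be zero produces precisely \eqref{eq:ps1storder}. This iterative view also explains why the answer is a genuinely finite sum rather than one of the infinite series appearing in the classical formulas cited earlier.

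The main point requiring care — though it is bookkeeping rather than a genuine obstacle — is the boundary behaviour of the empty sum. When $0 \le t < 1$ one has $\lfloor t\rfloor = 0$, so $y_p(t) = 0$, while $\lfloor t+1\rfloor = 1$ gives $y_p(t+1) = f(t)$; the equation then reads $f(t) - \lambda\cdot 0 = f(t)$, which holds. I would confirm that the reindexing and the cancellation above degrade gracefully to this case, so that the identity is valid throughout the domain of interest $t \ge 0$ and not merely at integer arguments.
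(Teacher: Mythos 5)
Your proof is correct and takes essentially the same route as the paper's: both verify the formula by direct substitution, using $\lfloor t+1\rfloor = \lfloor t\rfloor + 1$ to peel off the leading term $f(t)$ and reindex the remaining sum as $\lambda\, y_p(t)$. Your added remarks on deriving the formula by unrolling the recurrence and on the empty-sum boundary case $0 \le t < 1$ are sound supplements but not part of the paper's (terser) argument.
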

 \begin{proof}
 Let $y_p(t)$ is defined by (\ref{eq:1stordnonhomogeneous}). Then
 \begin{align*}
    y_p(t+1) =  \sum_{s=1}^{\lfloor t\rfloor +1}\lambda^{s-1} f(t+1-s)&= f(t)+ \sum_{s=2}^{\lfloor t\rfloor +1}\lambda^{s-1} f(t-s+1) \\
    & =f(t)+ \sum_{s=1}^{\lfloor t\rfloor}\lambda^{s} f(t-s) = f(t)+\lambda y_p(t).
 \end{align*}
 \end{proof}
\begin{remark}
The method of finding particular solution in Theorem \ref{eq:Eyeopener1} is equivalent to  calculating the antidifference of the function $f(t)$ given by :
  $$    (E-\lambda I )^{-1} f(t)=  \sum_{s=1}^{\lfloor t\rfloor }\lambda^{s-1} f(t-s).   $$
  In particular for $\lambda = 1 $,
  $$    \Delta ^{-1} f(t)=  \sum_{s=1}^{\lfloor t\rfloor } f(t-s).   $$

\end{remark}

\begin{example}
  Consider the difference equation
  $$ y(t+1)-y(t)=a, a\in \mathbb{R}, a \neq 0  .$$
According to (\ref{eq:ps1storder}), the particular  solution is calculated as
  $$ y_p(t)= \sum_{s=1}^{\lfloor t\rfloor} a = a \lfloor t \rfloor   = a(t -\{t\})= at- a\{t\}.  $$
  Note that for the  fractional part of $t$ which is defined as $\{t\}:= t-\lfloor t \rfloor$, the term $ a\{t\}   \in \mathbb{P}_1 $ and may be absorbed in the general solution of the homogeneous equation. Therefore we may write the particular solution as $ y_p(t)=at $.
  \end{example}

\begin{example}
  Consider the difference equation $y(t+1)-y(t)= t $. According to (\ref{eq:ps1storder}), the particular  solution is calculated as
\begin{align*}
    y_p(t)  =\sum_{s=1}^{\lfloor t \rfloor} (t-s)& = t\lfloor t \rfloor - \sum_{s=1}^{\lfloor t\rfloor} s  \\
     & = t\lfloor t \rfloor - \frac{ \lfloor t \rfloor ( \rfloor t \rfloor + 1)} {2} = \frac{t(t-1)}{2}  - \frac{\{ t \} ( \{ t \}-1 )  }{2}.
  \end{align*}
  We may drop the term $   \frac{\{ t \} ( \{ t \}-1 )  }{2} \in \mathbb{P}_1 $ to only write $ y_p(t)= \frac{t(t-1)}{2} $.
 \end{example}

  \begin{example}
  Consider the difference equation $y(t+1)-y(t)= a^t, a>0, a \neq 1 $. According to (\ref{eq:ps1storder}), the particular  solution is calculated as
$$ y_p(t)  =\sum_{s=1}^{\lfloor t \rfloor} a^{t-s} = a^t   \sum_{s=1}^{\lfloor t\rfloor} a^{ -s }   =  a^t \frac{ \left( a^{-\lfloor t \rfloor  }-1\right) }{1-a} =  \frac{a^t}{a-1} -   \frac{a^{\{t\}}}{a-1}.   $$
Note that for the  fractional part of $t$, which is defined as $\{t\}:= t-\lfloor t \rfloor$, the term $ \frac{a^{\{t\}}}{a-1}  \in \mathbb{P}_1 $ and may be absorbed in the solution of the homogeneous equation. Therefore, we may write the particular solution as $ y_p(t)=\frac{a^t}{a-1}$ or
\begin{equation}\label{eq:exponential}
 \Delta^{-1} a^t =  \frac{a^t}{a-1},\quad  a \neq 1 .
\end{equation}
as it appears in textbooks. See, for example, \cite{LB},\cite{KM}, \cite{MT},\cite{CHR}.
\end{example}

An antidifferences calculated according to (\ref{eq:ps1storder}) differs from the tabulated results by a term that can be absorbed in the solution of the homogeneous equation.


 \begin{theorem}
 Let $F(x):= \Delta ^{-1}f(x) $ is a tabulated antidifference of a function $f(x) $. The antidifference calculated according to (\ref{eq:ps1storder}) and a tabulated result may be related as follows
   \begin{equation}\label{eq:tabulatedvsnew}
       \sum_{s=1}^{\lfloor x \rfloor} f(x-s)= F(x)-F(\{x\})
     \end{equation}
 \end{theorem}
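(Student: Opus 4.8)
The plan is to exploit the single defining property of a tabulated antidifference, namely that $F(x+1)-F(x)=f(x)$ for every $x$, which the excerpt lists among the basic properties of $\Delta^{-1}$. Equivalently, replacing $x$ by $x-s$ gives $f(x-s)=F(x-s+1)-F(x-s)$, so each summand on the left-hand side of (\ref{eq:tabulatedvsnew}) is itself a difference of consecutive values of $F$. This is the observation that turns the sum into a telescoping sum, and it is the only input the proof really needs.

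First I would set $n:=\lfloor x\rfloor$, so that the index $s$ runs over the integers $1,\dots,n$ and the argument $x-s$ decreases in unit steps. Substituting $f(x-s)=F(x-s+1)-F(x-s)$ and writing out the terms,
\begin{align*}
  \sum_{s=1}^{n} f(x-s)
  &= \sum_{s=1}^{n}\bigl(F(x-s+1)-F(x-s)\bigr)\\
  &= \bigl(F(x)-F(x-1)\bigr)+\bigl(F(x-1)-F(x-2)\bigr)+\cdots+\bigl(F(x-n+1)-F(x-n)\bigr),
\end{align*}
all interior terms cancel and only the two endpoints survive, leaving $F(x)-F(x-n)$. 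Next I would identify the surviving lower endpoint: since $n=\lfloor x\rfloor$, we have $x-n=x-\lfloor x\rfloor=\{x\}$ by the definition of the fractional part, so $F(x-n)=F(\{x\})$ and the sum equals $F(x)-F(\{x\})$, which is exactly (\ref{eq:tabulatedvsnew}).

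There is no serious obstacle in the computation itself; the argument is a one-step telescoping once the defining property is invoked, and the only bookkeeping to watch is that the integer index $n=\lfloor x\rfloor$ makes the endpoints land precisely at $x$ and at $\{x\}$. The subtler point worth addressing is well-definedness: a tabulated antidifference is determined only up to an additive period-$1$ function $\mu\in\mathbb{P}_1$. Replacing $F$ by $F+\mu$ changes the right-hand side by $\mu(x)-\mu(\{x\})$, and since $x$ and $\{x\}$ differ by the integer $\lfloor x\rfloor$, periodicity gives $\mu(x)=\mu(\{x\})$, so the change vanishes. Thus both sides of (\ref{eq:tabulatedvsnew}) are independent of which antidifference $F$ is chosen, and I would include this observation to confirm that the identity is genuinely canonical rather than an artifact of a particular table entry.
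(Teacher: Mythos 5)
Your proof is correct, and it reaches the identity by a route that is mechanically equivalent to, but presented quite differently from, the paper's. The paper argues at the level of operators: it writes $\sum_{s=1}^{\lfloor x\rfloor} f(x-s) = \bigl(\sum_{s=1}^{\lfloor x\rfloor} E^{-s}\bigr) f(x)$, sums the finite geometric series of shift operators to obtain $\frac{I - E^{-\lfloor x\rfloor}}{\Delta} f(x)$, and then reads off $F(x) - F(\{x\})$. Your telescoping argument is the pointwise version of exactly that computation --- the geometric-series identity for $\sum_s E^{-s}$ is itself proved by the same cancellation --- so the underlying mechanism is identical. What your version buys is rigor at the one step the paper leaves formal: dividing by $E^{-1}-I$ (equivalently by $\Delta$) is delicate because $\Delta$ is not injective (its kernel is $\mathbb{P}_1$), so the symbol $\frac{1}{\Delta}$ only acquires meaning after a choice of antidifference $F$ has been fixed; by substituting $f(x-s) = F(x-s+1) - F(x-s)$ from the outset, you never invert an operator at all. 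Your closing observation --- that replacing $F$ by $F+\mu$ with $\mu \in \mathbb{P}_1$ changes the right-hand side by $\mu(x)-\mu(\{x\})=0$ because $x$ and $\{x\}$ differ by an integer --- does not appear in the paper and is a worthwhile addition, since it shows the identity is independent of which tabulated antidifference is used, which the theorem's phrasing (``a tabulated antidifference'') implicitly requires.
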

    \begin{proof}
  \begin{align*}
    \sum_{s=1}^{\lfloor x \rfloor} f(x-s)= \left(\sum_{s=1}^{\lfloor x \rfloor}E^{-s} \right)f(x)
    &= \frac{  E^{-1} \left(  E^{-\lfloor x\rfloor}-I\right)}  {E^{-1}-I} f(x)\\
    &= \frac{I- E^{-\lfloor x\rfloor}}{\Delta}f(x)  \\
       & = F(x)-F(\{x\})
    \end{align*}
\end{proof}

\begin{example}
According to tabulated results the antidifference of the function $f(t)=\frac{1}{t}$   is the digamma function $\Psi(t)$. That is
$$\Psi(t) = \Delta^{-1} \frac{1}{t}. $$
By (\ref{eq:tabulatedvsnew}) we have the identity
 \begin{equation}\label{eq:thegammaidentity}
     \Psi (t) = \Psi (\{t\})   +  \sum_{s=0}^{\lfloor t\rfloor } \frac{1}{t-s}  \quad t \notin \mathbb{Z }.
 \end{equation}
 The result in (\ref{eq:thegammaidentity} ) helps us to calculate the digamma function of arbitrary  non-intger  number $t$ once we know the results of digamma function for values in the interval $0 < t< 1$,  by adding a term given by some summation. Similarly, using  (\ref{eq:tabulatedvsnew}) and tabulated result
 $$\Delta ^{-1} \ln t = \ln \Gamma (t), $$ we establish
 \begin{equation}\label{eq:lngammaidentity}
   \sum_{s=1}^{\lfloor t \rfloor}\ln (t-s)= \ln \frac{\Gamma (t) }{\Gamma ( \{t\}) },\quad t \notin \mathbb{Z }.
 \end{equation}
 Equation (\ref{eq:lngammaidentity}) may be written as
 \begin{equation}\label{eq:gammafraction}
   \frac{\Gamma (t) }{\Gamma ( \{t\}) }= \prod_{s=1}^{\lfloor t \rfloor} (t-s),\quad t \notin \mathbb{Z }.
 \end{equation}
\end{example}

\begin{theorem}
  If $f \in \mathbb{P}_T$, then $\Delta ^{-1} f(Tt) = tf(Tt) + \mu (t) $, where $\mu \in \mathbb{P}_1 $.
\end{theorem}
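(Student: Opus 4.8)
The plan is to exhibit an explicit antidifference of $f(Tt)$ by a direct computation and then to invoke the uniqueness of antidifferences up to a period-$1$ summand. Concretely, I would take as the candidate the function $G(t):=t\,f(Tt)$ and verify that $\Delta G(t)=f(Tt)$.

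First I would apply the definition $\Delta G(t)=G(t+1)-G(t)$ to obtain
$$ \Delta G(t)=(t+1)\,f\big(T(t+1)\big)-t\,f(Tt). $$
The crucial observation is that $T(t+1)=Tt+T$, so the hypothesis $f\in\mathbb{P}_T$, i.e. $f(x+T)=f(x)$, gives $f(Tt+T)=f(Tt)$. Substituting this yields $\Delta G(t)=(t+1)f(Tt)-t\,f(Tt)=f(Tt)$, which shows that $G(t)=t\,f(Tt)$ is a particular antidifference of $f(Tt)$.

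Second, I would appeal to the property recorded earlier in the excerpt that $\Delta\mu\equiv 0$ if and only if $\mu\in\mathbb{P}_1$; equivalently, $\Delta^{-1}\Delta f(t)=f(t)+\mu(x)$ with $\mu\in\mathbb{P}_1$. This means any two antidifferences of the same function differ by an element of $\mathbb{P}_1$. Since $\Delta^{-1}f(Tt)$ denotes the general antidifference, it equals the particular antidifference $t\,f(Tt)$ plus an arbitrary $\mu\in\mathbb{P}_1$, which is exactly the claimed identity.

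The step I would scrutinize most carefully — though it is not so much an obstacle as the heart of the argument — is the distinction between the unit shift that $\Delta$ performs on the variable $t$ and the induced shift it produces inside the argument of $f$. A shift $t\mapsto t+1$ translates into a shift $Tt\mapsto Tt+T$ in the argument of $f$, and it is precisely the match between this induced shift and the period $T$ of $f$ that lets the linear factor $t$ cancel the difference cleanly. Because $f$ is assumed only periodic, no convergence, analyticity, or growth conditions enter, so the whole argument reduces to this finite elementary verification.
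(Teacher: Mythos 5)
Your proof is correct, but it proceeds differently from the paper. You take the candidate $G(t)=t\,f(Tt)$, verify directly that $\Delta G(t)=(t+1)f(Tt+T)-tf(Tt)=f(Tt)$ using the periodicity $f(Tt+T)=f(Tt)$, and then invoke the fact that antidifferences are unique up to an element of $\mathbb{P}_1$. The paper instead applies its own newly introduced summation formula $\Delta^{-1}g(t)=\sum_{s=1}^{\lfloor t\rfloor}g(t-s)$ to $g(t)=f(Tt)$: since $s$ ranges over integers, periodicity gives $f(T(t-s))=f(Tt)$ for every term, so the sum collapses to $\lfloor t\rfloor f(Tt)=t\,f(Tt)-\{t\}f(T\{t\})$, and the correction term $\{t\}f(T\{t\})$ is recognized as an explicit element of $\mathbb{P}_1$ (being a function of $\{t\}$ alone). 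The two arguments buy different things: yours is more elementary and self-contained, needing only the definition of $\Delta$ and the uniqueness-up-to-$\mathbb{P}_1$ property, whereas the paper's computation serves as a worked application of its central summation formula --- the point of the theorem in context --- and moreover produces the periodic summand $\mu(t)=-\{t\}f(T\{t\})$ in closed form rather than as an unspecified element of $\mathbb{P}_1$. Both routes hinge on the same observation, namely that unit shifts in $t$ become shifts by the period $T$ inside the argument of $f$.
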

\begin{proof}
 $$\Delta ^{-1} f(Tt)=\sum_{s=1}^{\lfloor t\rfloor}f(T(t-s))=\sum_{s=1}^{\lfloor t\rfloor}f(Tt)=\lfloor t \rfloor f(Tt)=tf(Tt)-\{t\}f (T\{t\}). $$
 The  term $\{t\}f (T\{t\})\in \mathbb{P}_1 $, and can be generalized with an arbitrary term $ \mu(t) \in \mathbb{P}_1  $.
\end{proof}
While the antidifference formula  (\ref{eq:ps1storder}) is we may resort to other techniques for deriving the indifference of other functions.

\begin{example}
Taking the imaginary and the real parts and by using  the formula (\ref{eq:exponential})
  $$ \Delta ^{-1}\sin x = \Delta ^{-1} \Im (e^{ix}) =   \Im  \Delta ^{-1}  (e^{ix})= \Im \frac{e^{ix}}{e^{i}-1}= \frac{\sin(x-1)-\sin x }{2-2 \cos 1 }. $$

  $$ \Delta ^{-1}\cos x = \Delta ^{-1} \Re (e^{ix}) =   \Re  \Delta ^{-1}  (e^{ix})= \Re \frac{e^{ix}}{e^{i}-1}= \frac{\cos(x-1)-\cos x }{2-2 \cos 1 } .$$

\end{example}

\subsection{Particular solution of higher order linear difference equations }

Consider the nonhomogeneous linear difference equation with constant coefficients
  \begin{equation}\label{eq:nonhomogeneous}
    \sum_{i=0}^{n}a_iy(t+i)= f(t), \quad        a_0a_n  \neq 0.
  \end{equation}
  The inhomogeneous equation (\ref{eq:nonhomogeneous} )can be written in operator form as
  \begin{equation}\label{eq:operatorformnthord}
    \Phi(E) y(t)=f(t),
  \end{equation}
  where
  \begin{equation}\label{eqoperator}
    \Phi(E):= \sum_{i=0}^{n}a_iE^{i}, \quad E^0= I .
  \end{equation}
  Let the operator $\Phi(E)y(t) $  can be factored into linear factors as
  \begin{equation}\label{eq:operatorfactor1}
     \Phi(E)=\prod_{i=1}^{n} (E-\lambda_i I)
  \end{equation}
With conditions of the linear difference equation with constant coefficients we formulate the particular solution for the homogeneous equation as in the next theorem.

 \begin{theorem}
 Then the particular solution $y_p$ of  the difference equation $\Phi(E)y(t)= f(t) $ is given by
\begin{equation}\label{eq:lambdamfoldsoln}
   y_p(t)=  \sum_{s_n=1}^{\lfloor t\rfloor }  \sum_{s_{n-1}=1 }^{\lfloor t-s_n\rfloor}...\sum_{s_1=1}^{\lfloor t-s_2-s_3-...-s_{n}\rfloor  }  \left( \prod_{j=1}^{n} \lambda_j^{s_j-1}  \right) f\left( t-\sum_{j=1}^{n} s_j \right) .
 \end{equation}
\end{theorem}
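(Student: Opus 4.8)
The plan is to induct on the order $n$, peeling off one linear factor of $\Phi(E)$ at each stage and invoking the first-order result of Theorem \ref{eq:Eyeopener1}. Note first that the hypothesis $a_0a_n\neq 0$ forces every root $\lambda_i$ to be nonzero. For $\lambda\neq 0$ let $R_\lambda$ denote the first-order solution operator of that theorem; by the remark following it,
\begin{equation*}
  R_\lambda g(t) = \sum_{s=1}^{\lfloor t\rfloor}\lambda^{s-1} g(t-s), \qquad (E - \lambda I)\,R_\lambda\, g = g
\end{equation*}
for every $g\in\mathcal{F}$, so each $R_{\lambda_i}$ is well defined. The base case $n=1$ is exactly Theorem \ref{eq:Eyeopener1}, with $y_p = R_{\lambda_1}f$.

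For the inductive step, write $y_p^{(n)}$ for the $n$-fold nested sum on the right of (\ref{eq:lambdamfoldsoln}) and $y_p^{(n-1)}$ for the analogous sum built from $\lambda_1,\dots,\lambda_{n-1}$. First I would pull out the outermost summation to exhibit $y_p^{(n)}$ as a single application of $R_{\lambda_n}$:
\begin{equation*}
  y_p^{(n)}(t) = \sum_{s_n=1}^{\lfloor t\rfloor}\lambda_n^{\,s_n-1}\, y_p^{(n-1)}(t-s_n) = R_{\lambda_n}\,y_p^{(n-1)}(t).
\end{equation*}
The point to check here is the bookkeeping of the nested floor limits: since each $s_n$ is a positive integer, $\lfloor t-s_n\rfloor = \lfloor t\rfloor - s_n$, so substituting $t\mapsto t-s_n$ into $y_p^{(n-1)}$ reproduces precisely the inner limits $\lfloor t-s_n\rfloor,\ \lfloor t-s_n-s_{n-1}\rfloor,\dots$ of (\ref{eq:lambdamfoldsoln}) and turns the argument of $f$ into $t-\sum_{j=1}^{n}s_j$. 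Iterating, one sees $y_p^{(n)} = R_{\lambda_n}R_{\lambda_{n-1}}\cdots R_{\lambda_1}f$.

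With this identification the verification is immediate. Because the factors $E-\lambda_i I$ are polynomials in the single operator $E$ they commute, so I may rearrange $\Phi(E) = \big[\prod_{i=1}^{n-1}(E-\lambda_i I)\big](E-\lambda_n I)$ and apply the rightmost factor first. Using $(E-\lambda_n I)R_{\lambda_n} = I$ gives $(E-\lambda_n I)\,y_p^{(n)} = y_p^{(n-1)}$, after which the induction hypothesis $\prod_{i=1}^{n-1}(E-\lambda_i I)\,y_p^{(n-1)} = f$ yields $\Phi(E)\,y_p^{(n)} = f$, closing the induction.

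The only real obstacle is the notational bookkeeping of the nested floor limits under the shift $t\mapsto t-s_n$; the integer-shift identity $\lfloor t-m\rfloor = \lfloor t\rfloor - m$ is exactly what makes the telescoping clean. It is worth stressing that the argument uses only that the polynomial factors $E-\lambda_i I$ commute and that each $R_\lambda$ is a \emph{one-sided} (right) inverse of its own factor; the solution operators $R_{\lambda_i}$ themselves neither commute with $E$ nor with one another, so reordering them would change $y_p$ by a term lying in the kernel of $\Phi(E)$, consistent with (\ref{eq:lambdamfoldsoln}) delivering one particular solution rather than the general one. This is also why I verify the equation by peeling the factors off $y_p^{(n)}$ in the matching order rather than by appealing to any commutation of the $R_{\lambda_i}$.
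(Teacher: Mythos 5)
Your proposal is correct and follows essentially the same route as the paper, whose proof is only the one-line sketch ``induction over $m$ and the results of Theorem \ref{eq:Eyeopener1}.'' You supply the details that sketch omits --- the identification of the nested sum as $R_{\lambda_n}R_{\lambda_{n-1}}\cdots R_{\lambda_1}f$, the use of $(E-\lambda I)R_\lambda = I$ as a right inverse, and the commutation of the polynomial factors --- all of which are exactly what is needed to make the paper's intended induction rigorous.
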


\begin{proof}
  The proof is done by induction over $m$ and the results of Theorem ( ).
\end{proof}

\begin{corollary}
  Let $m \in \mathbb{N}, \lambda \neq 0$. Then the particular solution $y_p$ of the difference equation
\begin{equation}\label{eq:lambdamfolddifference}
(E-\lambda I)^my(t)=f(t)
\end{equation}
  is given by
 \begin{equation}\label{eq:lambdamfoldsoln}
   y_p(t)=  \sum_{s_m=1}^{\lfloor t\rfloor }  \sum_{s_{m-1}=1 }^{\lfloor t-s_m\rfloor}...\sum_{s_1=0}^{\lfloor t-s_2-s_3-...-s_{m}\rfloor  } \lambda^{(s_1+s_2+...+s_m-m)} f\left( t-\sum_{j=1}^{m} s_j \right)
 \end{equation}
\end{corollary}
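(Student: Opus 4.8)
The plan is to recognize \eqref{eq:lambdamfolddifference} as the special case of the general product operator $\Phi(E)=\prod_{i=1}^{n}(E-\lambda_i I)$ treated in the preceding theorem, namely the case $n=m$ and $\lambda_1=\lambda_2=\cdots=\lambda_m=\lambda$. Under this identification the nested-sum formula already established for $\Phi(E)y=f$ applies verbatim, and the only remaining work is to simplify the coefficient. Substituting $\lambda_j=\lambda$ into $\prod_{j=1}^{m}\lambda_j^{\,s_j-1}$ collapses the product into a single power,
$$\prod_{j=1}^{m}\lambda^{\,s_j-1}=\lambda^{\sum_{j=1}^{m}(s_j-1)}=\lambda^{\,(s_1+s_2+\cdots+s_m)-m},$$
which is exactly the exponent in \eqref{eq:lambdamfoldsoln}; the nested ranges $\lfloor t\rfloor,\lfloor t-s_m\rfloor,\dots$ and the argument $f\bigl(t-\sum_j s_j\bigr)$ carry over unchanged. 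Thus, once the general theorem is granted, the corollary is immediate.

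For a self-contained argument I would instead prove \eqref{eq:lambdamfoldsoln} directly by induction on $m$. The base case $m=1$ is Theorem \ref{eq:Eyeopener1}, which gives $(E-\lambda I)^{-1}f(t)=\sum_{s=1}^{\lfloor t\rfloor}\lambda^{s-1}f(t-s)$. For the inductive step I would factor $(E-\lambda I)^{m}=(E-\lambda I)\,(E-\lambda I)^{m-1}$, so that $y_p=(E-\lambda I)^{-1}\bigl[(E-\lambda I)^{-(m-1)}f\bigr]$. Writing $g=(E-\lambda I)^{-(m-1)}f$, the inductive hypothesis expresses $g$ as the $(m-1)$-fold nested sum in the variables $s_1,\dots,s_{m-1}$, and applying the first-order formula prepends one outer summation $\sum_{s_m=1}^{\lfloor t\rfloor}\lambda^{s_m-1}(\,\cdot\,)$ evaluated at $t-s_m$. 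Carrying the down-shift $t\mapsto t-s_m$ through every occurrence of $t$ in $g$ turns each upper limit $\lfloor t-\cdots\rfloor$ into $\lfloor t-s_m-\cdots\rfloor$ and each argument into $f(t-s_m-\cdots)$, producing exactly the $m$-fold expression of \eqref{eq:lambdamfoldsoln} with exponents accumulating as $\sum_{j=1}^{m}(s_j-1)$.

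The delicate point is the bookkeeping of the nested floor-function limits as the operators are composed. In the inductive route the down-shifts are by integers $s_j$, so the substitution $t\mapsto t-s_m$ enters the floor arguments additively and no off-by-one corrections arise; the genuine floor subtlety, $\lfloor t+1\rfloor=\lfloor t\rfloor+1$, is already absorbed into the cited Theorem \ref{eq:Eyeopener1}. If one prefers instead to verify the answer by applying $(E-\lambda I)^m$ directly to the proposed $y_p$, then that increment identity is precisely what must be propagated through all $m$ nested sums so that the correct boundary term is peeled off at each of the $m$ applications. I would also flag that, for the formula to agree with both the specialization and the inductive unfolding, the innermost index should run from $s_1=1$ as in Theorem \ref{eq:Eyeopener1}; the displayed lower limit $s_1=0$ in \eqref{eq:lambdamfoldsoln} appears to be a typographical slip, since starting at $1$ is what each route yields.
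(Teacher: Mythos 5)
Your proposal is correct and follows essentially the same route as the paper: the corollary is the specialization $\lambda_1=\cdots=\lambda_m=\lambda$ of the preceding theorem on $\Phi(E)=\prod_{i=1}^{n}(E-\lambda_i I)$, whose proof the paper itself gives only as a one-line sketch (``induction over $m$'' using the first-order result of Theorem \ref{eq:Eyeopener1}), which is exactly the induction you carry out in detail. Your flag on the innermost lower limit is also right: $s_1=0$ is inconsistent both with the general theorem being specialized and with the inductive unfolding, so it should read $s_1=1$ and is indeed a typographical slip in the paper.
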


\subsection{The floor function modulo $h$ and the fraction part function modulo-$h$ }

Similar to the usual floor function and fractional part function, we introduce the floor function modulo $h$ and the fractional part function modulo $h$ as follows:
\begin{definition}
  Let $h > 0 $. We define the floor function modulo $h$ as a function $ \lfloor  . \rfloor_h :  \mathbb{R} \rightarrow \mathbb{Z}  $ by $\lfloor  t \rfloor_h  = n $, where $t= n h +r, 0 \leq r < h $. The fraction part function  modulo $h$  denoted by $ \{ . \}_h$ is defined  as
  $$ \{t \}_h = t- h \lfloor t \rfloor_h. $$
  We also observe that $ \lfloor  x  \rfloor_h  =  \lfloor  \frac{t}{h}  \rfloor  $. But $\{ t \}_h \neq \{ \frac{t}{h} \}$.
\end{definition}
\begin{theorem}
 For a given $h > 0 $, the function $ \lfloor  . \rfloor_h $ is well-defined.
\end{theorem}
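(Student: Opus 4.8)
The plan is to establish that for every $t \in \mathbb{R}$ there exists a \emph{unique} integer $n$ together with a unique remainder $r$ satisfying $t = nh + r$ and $0 \le r < h$; well-definedness of $\lfloor \cdot \rfloor_h$ is exactly the assertion that the value $n$ is determined unambiguously by $t$ and $h$. This is nothing but the division algorithm for a real dividend and a positive real divisor, so I would reduce the whole matter to the familiar properties of the ordinary floor function $\lfloor \cdot \rfloor$.

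For existence, I would set $n := \lfloor t/h \rfloor$ and $r := t - nh$. From the defining inequality of the ordinary floor, $n \le t/h < n+1$; multiplying through by $h > 0$ preserves the direction of the inequalities and yields $nh \le t < (n+1)h$, whence $0 \le r = t - nh < h$. Thus a valid representing pair $(n,r)$ always exists, which also confirms the remark accompanying the definition that $\lfloor t \rfloor_h = \lfloor t/h \rfloor$.

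For uniqueness, I would suppose two representations $t = n_1 h + r_1 = n_2 h + r_2$ with $n_1, n_2 \in \mathbb{Z}$ and $0 \le r_1, r_2 < h$. Subtracting gives $(n_1 - n_2)h = r_2 - r_1$; since both remainders lie in $[0,h)$ we have $\abs{r_2 - r_1} < h$, and dividing by $h > 0$ forces $\abs{n_1 - n_2} < 1$. An integer of absolute value strictly less than $1$ is necessarily zero, so $n_1 = n_2$, and then $r_1 = r_2$ follows immediately. Hence the map $t \mapsto n$ is single-valued, i.e.\ $\lfloor \cdot \rfloor_h$ is well-defined.

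I do not expect any genuine obstacle here. The only steps demanding mild care are the direction of the inequalities when multiplying by $h$ (harmless precisely because $h > 0$) and the strict bound $\abs{r_2 - r_1} < h$, which relies on the remainders living in the \emph{half-open} interval $[0,h)$ rather than a closed one; this half-openness is exactly what eliminates the potential ambiguity at the right endpoint and is therefore the conceptual crux of the argument.
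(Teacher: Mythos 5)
Your uniqueness argument is exactly the paper's proof: subtract the two representations to get $\abs{n_1-n_2}h = \abs{r_2-r_1} < h$, conclude $\abs{n_1-n_2} < 1$, and use the fact that an integer of absolute value less than $1$ is zero. Where you go beyond the paper is the existence half: the paper proves only that a representation $t = nh + r$, $0 \le r < h$, is unique, and never verifies that one exists for every real $t$. Your construction $n := \lfloor t/h \rfloor$, $r := t - nh$, with the inequality $nh \le t < (n+1)h$ obtained by multiplying the defining property of the ordinary floor by $h > 0$, supplies that missing piece and simultaneously justifies the paper's remark that $\lfloor t \rfloor_h = \lfloor t/h \rfloor$. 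So your proof is correct and strictly more complete than the paper's; strictly speaking, well-definedness of a function on all of $\mathbb{R}$ requires both totality and single-valuedness, and the paper addresses only the latter.
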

\begin{proof}
  Suppose that $t= n_1 h +r_1 = n_2 h + r_2 $ are two possible representations of $t$. Then we have
  $$ |n_1-n_2|h= |r_2-r_1| < h .$$
  This implies that $ |n_1-n_2|< 1 $. Consequently, $|n_1-n_2|=0  $ or $n_1= n_2 $ and $ r_1=r_2 $.
\end{proof}
We have the following result:
\begin{equation}\label{eq:limitsofmoduloh}
  \lim_{h\rightarrow 0 +}  \{ t \}_h = 0,\quad   \lim_{h\rightarrow 0 +} h \lfloor t \rfloor_h = t.
\end{equation}

\begin{lemma}
  $y_p$ is a particular solution of the difference equation $y(x+h)- \lambda y(x) f(x) $ if and only if $y_p$ is the antidifference of the difference operator $E^h-\lambda I $. That is, $y_p(x) = (E^h-\lambda I)^{-1}f(x) $
\end{lemma}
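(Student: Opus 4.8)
The plan is to treat this lemma as an unpacking of the inverse-operator notation, the only real content being the correct interpretation of the generalized shift $E^h$. First I would fix the convention that $E^h$ denotes the shift-by-$h$ operator, acting on any $g \in \mathcal{F}$ by $E^h g(x) = g(x+h)$ with $I g(x) = g(x)$, so that
$$(E^h - \lambda I) g(x) = g(x+h) - \lambda g(x).$$
Under this convention the difference equation $y(x+h) - \lambda y(x) = f(x)$ is literally the operator equation $(E^h - \lambda I) y(x) = f(x)$.

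Next I would recall, in direct analogy to the earlier definition $\Delta F = f \iff F = \Delta^{-1} f$, that a function $y_p$ is called an antidifference of $f$ under the operator $E^h - \lambda I$ exactly when $(E^h - \lambda I) y_p = f$, and that this relation is precisely what the notation $y_p = (E^h - \lambda I)^{-1} f$ abbreviates. With the reformulation above in hand, the biconditional then splits into two immediate implications: if $y_p$ is a particular solution, then $(E^h - \lambda I) y_p = f$, which is the defining relation for $y_p = (E^h - \lambda I)^{-1} f$; conversely, if $y_p = (E^h - \lambda I)^{-1} f$, then by definition $(E^h - \lambda I) y_p = f$, which rewritten pointwise reads $y_p(x+h) - \lambda y_p(x) = f(x)$, so that $y_p$ solves the equation.

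I do not expect a genuine obstacle, since the statement is a tautology once the operator notation is settled; the care lies in two places. The first is the meaning of $E^h$: because $E$ was introduced as the unit shift $E y(x) = y(x+1)$, writing $E^h$ for non-integer $h$ is a convention for the shift-by-$h$ map rather than a literal power, and I would flag this explicitly so the two forms of the equation coincide. The second is non-uniqueness: both a ``particular solution'' and an antidifference are determined only up to a function $u$ in the kernel of $E^h - \lambda I$, namely those satisfying $u(x+h) = \lambda u(x)$, so the phrase ``the antidifference'' should be read as ``any antidifference,'' with the equivalence holding between the two families of such functions, which coincide.
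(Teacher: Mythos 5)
Your proposal is correct. Note that the paper states this lemma without any proof at all, evidently treating it as a piece of notation-unpacking, which is exactly what your argument makes explicit: the equation $y(x+h)-\lambda y(x)=f(x)$ \emph{is} the operator equation $(E^h-\lambda I)y(x)=f(x)$ once $E^h$ is read as the shift-by-$h$ operator, and $y_p=(E^h-\lambda I)^{-1}f$ abbreviates precisely the relation $(E^h-\lambda I)y_p=f$, so the biconditional is immediate in both directions. The two caveats you flag are genuine improvements over the paper's presentation: the paper never defines $E^h$ for non-integer $h$ (its operator $E$ is the unit shift, so $E^h$ must be taken as a convention, as you say), and the phrase ``the antidifference'' is an abuse of language, since solutions are determined only up to the kernel of $E^h-\lambda I$ --- the functions $u$ with $u(x+h)=\lambda u(x)$, which is exactly the ambiguity the paper's subsequent theorem (two particular solutions differ by an element of $\mathbb{P}_h$, in the case $\lambda=1$) is meant to address. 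Also worth noting: the statement as printed has a typo ($y(x+h)-\lambda y(x)\, f(x)$ missing the equals sign), which your reading silently and correctly repairs.
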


\begin{theorem}
  Let  $y_p$ and $\tilde{y}_p$  be two particular solutions of the first order difference equation
  $$  y(t+h)-y(t)=f(t),$$
  then $y_p (t)=\tilde{y}_p(t)+\mu(t) $ for some $\mu \in \mathbb{P}_h $.
\end{theorem}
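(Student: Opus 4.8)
The plan is to treat this as the discrete analogue of the fact that two antiderivatives of the same function differ by a constant, where here the additive ``constant'' is allowed to be any element of $\mathbb{P}_h$. First I would introduce the auxiliary function $\mu(t) := y_p(t) - \tilde{y}_p(t)$ and reduce the whole statement to showing that $\mu$ satisfies $\mu(t+h) = \mu(t)$ for every $t \in \mathbb{R}$, since by the definition in \eqref{eq:periodicspace} this is exactly the assertion $\mu \in \mathbb{P}_h$. Once that is established, rearranging $\mu(t) = y_p(t) - \tilde{y}_p(t)$ gives the claimed identity $y_p(t) = \tilde{y}_p(t) + \mu(t)$.

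The key step is a single subtraction exploiting the linearity of the operator $E^h - I$. Because $y_p$ and $\tilde{y}_p$ are both particular solutions, they satisfy $y_p(t+h) - y_p(t) = f(t)$ and $\tilde{y}_p(t+h) - \tilde{y}_p(t) = f(t)$. Subtracting the second identity from the first and regrouping the terms, I would obtain
\[
\mu(t+h) - \mu(t) = \bigl(y_p(t+h) - \tilde{y}_p(t+h)\bigr) - \bigl(y_p(t) - \tilde{y}_p(t)\bigr) = f(t) - f(t) = 0 .
\]
Thus $\mu$ solves the homogeneous equation $\mu(t+h) - \mu(t) = 0$, i.e. $\mu(t+h) = \mu(t)$ identically, so $\mu \in \mathbb{P}_h$ and the result follows.

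I do not expect a genuine obstacle here: the nonhomogeneous term $f$ cancels regardless of its form, and the argument needs nothing beyond linearity and the definition of the period space. The only point deserving a word of care is the observation that the solution set of $\mu(t+h) - \mu(t) = 0$ is \emph{precisely} $\mathbb{P}_h$, which is immediate from \eqref{eq:periodicspace} but is what licenses the phrase ``for some $\mu \in \mathbb{P}_h$.'' If one wanted the full characterization, I would also note the converse direction, that adding any $\mu \in \mathbb{P}_h$ to a particular solution again yields a particular solution, by running the same cancellation in reverse; this is not required for the stated theorem but confirms that $\mathbb{P}_h$ is exactly the ambiguity in the particular solution.
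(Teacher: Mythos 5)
Your proof is correct and complete: subtracting the two defining identities cancels $f$ and shows that $\mu := y_p - \tilde{y}_p$ satisfies $\mu(t+h)=\mu(t)$ for all $t$, which is exactly membership in $\mathbb{P}_h$ as defined in the paper's equation for the period space. For what it is worth, the paper states this theorem with no proof at all, so your argument supplies the missing justification rather than parallels an existing one; it is the natural linearity-and-cancellation argument, and your closing remark (that adding any $\mu \in \mathbb{P}_h$ to a particular solution again gives a particular solution) correctly identifies $\mathbb{P}_h$ as the exact ambiguity, which is the content the paper implicitly relies on when it discards $\mathbb{P}_1$-terms in its examples.
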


 \begin{theorem}
 A particular solution of the linear difference equation
  \begin{equation}\label{eq:hordnonhomogeneous}
   y(t+h)-\lambda y(t)= f(t),\quad \lambda \neq 0,\quad  h >0
 \end{equation}
  is given by
 \begin{equation}\label{eq:pshorder}
   y_p(t)=  \sum_{s=1}^{\lfloor t\rfloor_h }\lambda^{s-1} f(t-hs).
 \end{equation}
 \end{theorem}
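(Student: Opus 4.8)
The plan is to verify the claim by direct substitution, exactly mirroring the argument used for the first-order unit-step case in Theorem \ref{eq:Eyeopener1}. That is, I would take the proposed
$$ y_p(t) = \sum_{s=1}^{\lfloor t\rfloor_h} \lambda^{s-1} f(t-hs), $$
compute $y_p(t+h)$, and show that it equals $f(t) + \lambda\, y_p(t)$, which is precisely the assertion that $y_p$ solves (\ref{eq:hordnonhomogeneous}).

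The key preliminary step, and the one I expect to carry the whole argument, is establishing how the modular floor function shifts under a translation by $h$: namely $\lfloor t+h\rfloor_h = \lfloor t\rfloor_h + 1$. Writing $t = nh + r$ with $0 \le r < h$ (so that $\lfloor t\rfloor_h = n$ by the well-definedness already proved), we get $t+h = (n+1)h + r$ with the same remainder $r$ still lying in $[0,h)$, hence $\lfloor t+h\rfloor_h = n+1$. This single identity is what replaces the bound $\lfloor t\rfloor + 1$ appearing in the $h=1$ proof, and it is the only place where the structure of $\lfloor\,\cdot\,\rfloor_h$ is genuinely used; everything else is bookkeeping.

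With that in hand the remainder is a routine index manipulation. Substituting the shifted upper limit gives
$$ y_p(t+h) = \sum_{s=1}^{\lfloor t\rfloor_h + 1} \lambda^{s-1} f(t+h-hs). $$
I would then peel off the $s=1$ term, which is $\lambda^{0} f(t+h-h) = f(t)$, reindex the remaining sum by $s \mapsto s+1$ so that it runs from $1$ to $\lfloor t\rfloor_h$, and observe that each summand becomes $\lambda^{s} f(t-hs)$. Factoring a single $\lambda$ out of this sum recovers $\lambda \sum_{s=1}^{\lfloor t\rfloor_h} \lambda^{s-1} f(t-hs) = \lambda\, y_p(t)$, so that $y_p(t+h) = f(t) + \lambda\, y_p(t)$, as required.

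Two minor points I would watch. Since only a \emph{particular} solution is claimed, I need not address uniqueness; any two such solutions differ by an element of $\mathbb{P}_h$, as recorded in the preceding theorem. And on the boundary interval $0 \le t < h$ one has $\lfloor t\rfloor_h = 0$, so the sum is empty and $y_p \equiv 0$ there; under the empty-sum convention the telescoping identity holds trivially on $[0,h)$ as well, so no separate case analysis is needed. The genuine mathematical content is thus concentrated entirely in the shift identity for $\lfloor\,\cdot\,\rfloor_h$.
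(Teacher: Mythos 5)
Your proposal is correct and follows essentially the same route as the paper: direct substitution of $t+h$, peeling off the $s=1$ term $f(t)$, and reindexing the remainder to recover $\lambda\,y_p(t)$. In fact you are somewhat more careful than the paper, which silently uses the shift identity $\lfloor t+h\rfloor_h=\lfloor t\rfloor_h+1$ that you prove explicitly via the representation $t=nh+r$, $0\le r<h$.
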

 \begin{proof}
 Let $y_p(t)$ is defined by (\ref{eq:pshorder}). Then,
\begin{align*}
    y_p(t+h) =  \sum_{s=1}^{\lfloor t\rfloor_h + 1}\lambda^{s-1} f(t+h-sh)&= f(t)+ \sum_{s=2}^{\lfloor t\rfloor_h +1 }\lambda^{s-1} f(t+h-sh) \\
    & = f(t)+ \sum_{s=1}^{\lfloor t\rfloor_h }\lambda^{s} f(t-sh) = f(t)+\lambda y_p(t).
 \end{align*}
 \end{proof}

 \begin{theorem}
 Let $$ \Phi (E):= \prod_{i=1}^{n} (E^{h_i}-\lambda_i I),\quad h_i >0,\quad \lambda_i \neq 0,\quad  i=1,2,...,n. $$
 Then a particular solution $y_p$ of  the difference equation $\Phi(E)y(t)= f(t) $ is given by
\begin{equation}\label{eq:lambdamfoldsoln}
   y_p(t)=  \sum_{s_n=1}^{\lfloor t\rfloor_{h_n} }  \sum_{s_{n-1}=1 }^{\lfloor t-s_n\rfloor_{h_{n-1}}}...\sum_{s_1=1}^{\lfloor t-s_2-s_3-...-s_{n}\rfloor_{h_1}  }  \left( \prod_{i=1}^{n}. \lambda_i^{s_i-1}  \right) f\left( t-\sum_{j=1}^{n} s_ih_i \right)
 \end{equation}
\end{theorem}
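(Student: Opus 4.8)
The plan is to prove the formula by induction on the number $n$ of linear factors, using the single-factor theorem (equation (\ref{eq:pshorder})) as both the base case and the engine of the inductive step. The crucial structural fact is that the shift operators commute, $E^{h_i}E^{h_j}=E^{h_i+h_j}=E^{h_j}E^{h_i}$, so the factors $(E^{h_i}-\lambda_i I)$ commute with one another and $\Phi(E)^{-1}$ may be computed as the composition of the individual inverses $(E^{h_i}-\lambda_i I)^{-1}$ taken in any order. For $n=1$ the claimed expression is exactly (\ref{eq:pshorder}), which the earlier theorem shows to be a right inverse of $E^{h_1}-\lambda_1 I$; this settles the base case.

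For the inductive step I would factor $\Phi(E)=(E^{h_n}-\lambda_n I)\,\Psi(E)$, where $\Psi(E):=\prod_{i=1}^{n-1}(E^{h_i}-\lambda_i I)$, and seek $y_p=\Psi(E)^{-1}(E^{h_n}-\lambda_n I)^{-1}f$. First apply the single-factor formula to invert the last factor, obtaining
$$ g(t):=(E^{h_n}-\lambda_n I)^{-1}f(t)=\sum_{s_n=1}^{\lfloor t\rfloor_{h_n}}\lambda_n^{s_n-1}f(t-h_n s_n). $$
Then apply the inductive hypothesis to the $(n-1)$-fold operator $\Psi(E)$ with $g$ in place of $f$, which yields $y_p$ as an $(n-1)$-fold nested sum in $s_1,\dots,s_{n-1}$ whose summand is $\bigl(\prod_{i=1}^{n-1}\lambda_i^{s_i-1}\bigr)\,g\bigl(t-\sum_{j=1}^{n-1}h_j s_j\bigr)$.

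The remaining work is to substitute the explicit form of $g$, evaluated at the shifted point $t-\sum_{j=1}^{n-1}h_j s_j$, back into this $(n-1)$-fold sum. This introduces the outer summation over $s_n$, whose upper limit is $\bigl\lfloor t-\sum_{j=1}^{n-1}h_j s_j\bigr\rfloor_{h_n}$, multiplies in the factor $\lambda_n^{s_n-1}$, and replaces the argument by $f\bigl(t-\sum_{j=1}^{n}h_j s_j\bigr)$; collecting the powers gives the product $\prod_{i=1}^{n}\lambda_i^{s_i-1}$ and reproduces the claimed nested-sum formula (\ref{eq:lambdamfoldsoln}). That $y_p$ is genuinely a particular solution then follows formally: since each individual inverse is a right inverse of its factor by the base theorem, $\Phi(E)y_p=(E^{h_n}-\lambda_n I)\Psi(E)\Psi(E)^{-1}(E^{h_n}-\lambda_n I)^{-1}f=f$.

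I expect the main obstacle to be the bookkeeping of the nested floor-function limits rather than any deep idea. Each inner bound $\lfloor\,\cdot\,\rfloor_{h_i}$ depends on all the outer indices through the cumulative shift $t-\sum_j h_j s_j$, so one must verify that applying the inverses in the chosen order reproduces exactly the stated dependence of the limits on $s_{i+1},\dots,s_n$; here the displayed upper limits, written as $\lfloor t-s_2-\cdots-s_n\rfloor_{h_1}$, should be read with the step sizes restored as $\bigl\lfloor t-h_2 s_2-\cdots-h_n s_n\bigr\rfloor_{h_1}$ to be consistent with the argument of $f$. One should also note that, exactly as in the single-factor case, the expression is determined only up to additive terms lying in the periodic spaces $\mathbb{P}_{h_i}$, which is consistent with $y_p$ being merely \emph{a} particular solution.
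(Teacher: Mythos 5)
Your proposal is correct and follows essentially the same route as the paper: the paper in fact prints no proof of this theorem at all, and its analogue with unit shifts is ``proved'' only by the remark that one uses induction together with the single-factor theorem, which is precisely the argument you carry out explicitly via (\ref{eq:pshorder}) and the commutativity of the shift operators. One slip to fix: with your composition $y_p=\Psi(E)^{-1}\bigl[(E^{h_n}-\lambda_n I)^{-1}f\bigr]$ the summation over $s_n$ ends up \emph{innermost} (its limit $\lfloor t-\sum_{j=1}^{n-1}h_js_j\rfloor_{h_n}$ depends on the other indices, so calling it the outer sum is inconsistent), whereas the stated formula has $s_n$ outermost; this is harmless, since either composing in the opposite order, $y_p=(E^{h_n}-\lambda_n I)^{-1}\bigl[\Psi(E)^{-1}f\bigr]$, or observing that every nesting order enumerates the same finite region $\{s_i\ge 1:\ \sum_{j=1}^{n}h_js_j\le t\}$ with the same summand, yields the displayed expression verbatim.
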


\begin{theorem}
Consider  the difference equation
\begin{equation}\label{eq:mfolddifference}
(E-I)^my(t)=f(t)
\end{equation}
 Then the particular solution of the difference equation (\ref{eq:mfolddifference}) is given by
 $$ y_p(t) = \sum_{s_m=1}^{\lfloor t\rfloor -1}  \sum_{s_{m-1}=0 }^{\lfloor t-s_n\rfloor-2}...\sum_{s_1=0}^{\lfloor t-s_2-s_3-...-s_{n}\rfloor -n }f( t-\sum_{j=1}^{n} s_j-n)    $$
\end{theorem}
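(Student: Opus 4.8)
The plan is to prove this by induction on $m$, using the first‑order antidifference formula with $\lambda=1$ as the engine, and then to reconcile the resulting ``clean'' nested sum with the shifted form in the statement by a single change of summation variable. Throughout, the one elementary fact that makes the floor‑function bookkeeping tractable is that every summation index is an integer, so that $\lfloor t-k\rfloor=\lfloor t\rfloor-k$ for any $k\in\mathbb{Z}$. (Here the indices written $s_1,\dots,s_n$ and the shift $-n$ in the statement should be read with $n=m$.)

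First I would record the base case $m=1$: since $E-I=E-\lambda I$ with $\lambda=1$, Theorem~\ref{eq:Eyeopener1} gives $\Delta^{-1}h(t)=\sum_{s=1}^{\lfloor t\rfloor}h(t-s)$ for any $h$. For the inductive step I write $(E-I)^m=(E-I)(E-I)^{m-1}$, set $g=(E-I)^{-(m-1)}f$, and evaluate $\Delta^{-1}g(t)=\sum_{s_m=1}^{\lfloor t\rfloor}g(t-s_m)$, substituting the inductive expression for $g(t-s_m)$. Because $s_m\in\mathbb{Z}$, each inner floor limit $\lfloor (t-s_m)-s_{m-1}-\cdots\rfloor$ collapses to $\lfloor t-s_m-s_{m-1}-\cdots\rfloor$, so the sums nest cleanly and produce
\begin{equation*}
y_p(t)=\sum_{s_m=1}^{\lfloor t\rfloor}\sum_{s_{m-1}=1}^{\lfloor t-s_m\rfloor}\cdots\sum_{s_1=1}^{\lfloor t-s_2-\cdots-s_m\rfloor} f\!\left(t-\sum_{j=1}^{m}s_j\right).
\end{equation*}
This clean form is exactly the $\lambda=1$ specialization of the preceding corollary, in which the weight $\lambda^{\,s_1+\cdots+s_m-m}$ reduces to $1$.

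To reach the stated form I would then change variables $s_j\mapsto s_j+1$ in every sum, i.e.\ shift each lower limit from $1$ down to $0$. This replaces $\sum_j s_j$ by $\sum_j s_j+m$, producing the shift $f(t-\sum_j s_j-m)$; and, using the integrality identity once more, the upper limit of each sum decreases by the number of shifted variables it contains. Concretely the outermost limit $\lfloor t\rfloor$ becomes $\lfloor t\rfloor-1$, the next becomes $\lfloor t-s_m\rfloor-2$, and the innermost becomes $\lfloor t-s_2-\cdots-s_m\rfloor-m$, so the successive offsets $-1,-2,\dots,-m$ in the statement arise precisely this way.

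I expect the main obstacle to be purely bookkeeping: tracking how each of the $m$ nested floor upper limits shifts under the reindexing, and confirming that the lower limit of the \emph{outermost} sum should read $s_m=0$ rather than $s_m=1$, so that no term is dropped. The $m=1$ consistency check $\sum_{s=1}^{\lfloor t\rfloor}f(t-s)=\sum_{s_1=0}^{\lfloor t\rfloor-1}f(t-s_1-1)$ pins this down. Once the reindexing is verified limit‑by‑limit, substituting $y_p$ back and applying $\Delta\Delta^{-1}=I$ exactly $m$ times confirms $(E-I)^m y_p=f$.
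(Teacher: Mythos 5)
Your proposal is correct, and it actually supplies something the paper omits: this theorem appears in the paper with no proof at all, and the nearest guidance is the one-line remark attached to the earlier $\Phi(E)=\prod_{i}(E-\lambda_i I)$ theorem that ``the proof is done by induction.'' Your argument---induction on $m$ using the first-order result of Theorem~\ref{eq:Eyeopener1} with $\lambda=1$, the integrality fact $\lfloor t-k\rfloor=\lfloor t\rfloor-k$ for $k\in\mathbb{Z}$, and then the reindexing $s_j\mapsto s_j+1$---is exactly what that sketch would have to expand to, so it is the natural completion of the paper's intended route rather than a departure from it. Equally valuable is your diagnosis of the statement as printed: the indices $n$ and $m$ must both be read as $m$, and the lower limit of the outermost sum must be $s_m=0$, not $s_m=1$. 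That fix is not optional: already for $m=1$ the printed limits give $\sum_{s_1=1}^{\lfloor t\rfloor-1}f(t-s_1-1)=\sum_{u=2}^{\lfloor t\rfloor}f(t-u)$, which drops the term $f(t-1)$, and since $f(t-1)$ is not $1$-periodic in general, the discrepancy cannot be absorbed into the homogeneous solution; applying $\Delta$ to the printed formula yields $f(t-1)$ rather than $f(t)$, and the same failure persists for all $m$. Two minor points: your closing sentence about applying $\Delta\Delta^{-1}=I$ a total of $m$ times is redundant, since the induction already establishes $(E-I)^m y_p=f$; and, as with the paper's own proof of the first-order theorem, the verification works only for $t\geq 0$ (peeling off the $s=1$ term requires $\lfloor t\rfloor+1\geq 1$), a restriction your proof inherits from the paper rather than introduces.
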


\section{ Application to linear  difference inequalities }
\begin{theorem}
  Let $h, \lambda > 0 $. The general solution of the difference inequality
  \begin{equation}\label{eq:horderinequality}
     y(t+h) -\lambda y(t) \geq 0
  \end{equation}
   is given by
  \begin{equation}\label{eq:positivelambdainequality}
    y(t)= \lambda^{\frac{t}{h}}\mu(t) +\sum_{s=1}^{\lfloor t \rfloor_h } \lambda ^{s-1} p(s-sh),
  \end{equation}
  where $p$ any non-negative real valued function and $\mu \in \mathbb{P}_h $.
\end{theorem}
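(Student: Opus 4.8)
The plan is to recognise the inequality as the family of non-homogeneous equations already solved in (\ref{eq:pshorder}), so that the problem reduces to characterising a solution set rather than performing a fresh computation. First I would introduce the \emph{defect function} $p(t) := y(t+h) - \lambda y(t)$. By definition a function $y$ satisfies (\ref{eq:horderinequality}) if and only if $p(t) \geq 0$ for every $t$, that is, if and only if $y$ solves the non-homogeneous equation
\[
  y(t+h) - \lambda y(t) = p(t)
\]
for some non-negative $p \in \mathcal{F}$. This converts the single inequality into a parametrised family of linear difference equations indexed by the non-negative functions $p$, each of which is covered by the earlier theorem.

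Next I would assemble the general solution from a particular solution together with the homogeneous solution. For a fixed non-negative $p$, equation (\ref{eq:pshorder}) with $f$ replaced by $p$ supplies the particular solution
\[
  y_p(t) = \sum_{s=1}^{\lfloor t \rfloor_h} \lambda^{s-1} p(t - sh),
\]
which is exactly the summation term in (\ref{eq:positivelambdainequality}). For the homogeneous part I would verify by direct substitution that, since $\lambda > 0$ makes $\lambda^{t/h}$ well defined for all real $t$, the function $y_h(t) = \lambda^{t/h}\mu(t)$ solves $y(t+h) - \lambda y(t) = 0$ precisely when $\mu(t+h) = \mu(t)$, i.e.\ $\mu \in \mathbb{P}_h$. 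Adding the two pieces yields the claimed form, and a one-line check ($y_h(t+h) - \lambda y_h(t) = 0$ while $y_p(t+h) - \lambda y_p(t) = p(t) \geq 0$) confirms the backward inclusion: every function of the stated shape satisfies the inequality.

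The crux is the forward inclusion, that \emph{every} solution of (\ref{eq:horderinequality}) has this shape, so that (\ref{eq:positivelambdainequality}) is genuinely the general solution. Given an arbitrary solution $y$, its defect $p(t) = y(t+h)-\lambda y(t)$ is non-negative, and I would set $\mu(t) := \lambda^{-t/h}\bigl(y(t) - y_p(t)\bigr)$. The work is to show $\mu \in \mathbb{P}_h$: using $y(t+h) = \lambda y(t) + p(t)$ and $y_p(t+h) = \lambda y_p(t) + p(t)$, the difference $y - y_p$ satisfies the homogeneous recurrence, whence $\mu(t+h) = \mu(t)$. This exhibits $y$ in the required form with the \emph{same} non-negative $p$ and simultaneously gives uniqueness of the decomposition. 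I expect this step to be the main obstacle, since it relies on the homogeneous solution space being exactly $\{\lambda^{t/h}\mu : \mu \in \mathbb{P}_h\}$ (the analogue of the earlier theorem asserting that two particular solutions differ by an element of $\mathbb{P}_h$), and one must be careful that the summation term in (\ref{eq:positivelambdainequality}) carries the argument $t - sh$ exactly as in (\ref{eq:pshorder}), rather than an expression independent of $t$.
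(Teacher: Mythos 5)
Your proposal is correct and follows essentially the same route as the paper: recasting the inequality as the family of nonhomogeneous equations $y(t+h)-\lambda y(t)=p(t)$ with $p \geq 0$ and then invoking superposition together with the particular-solution formula (\ref{eq:pshorder}). The paper's own proof is only a two-sentence sketch of this idea, so your added details (the verification that the homogeneous solutions are exactly $\lambda^{t/h}\mu(t)$ with $\mu \in \mathbb{P}_h$, and the forward inclusion via $\mu(t):=\lambda^{-t/h}\bigl(y(t)-y_p(t)\bigr)$) strengthen rather than diverge from it, and you are right that the summand's argument should read $p(t-sh)$ as in (\ref{eq:pshorder}), not $p(s-sh)$ as misprinted in the statement.
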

\begin{proof}
  From the inequality $ y(t+h) -\lambda y(t) \geq 0 $, we may establish a nonhomogeneous linear difference equation $ y(t+h) -\lambda y(t) = p(t)  $, where  $p$ is a non-negative function. By superposition principle, the general solution of the nonhomogeneous equation is obtained by sum the solution of the associated homogeneous equation and the particular solution of the nonhomogeneous equation.
\end{proof}

\begin{theorem}
  Let $h>0, \lambda < 0 $. The general solution of the difference inequality $ y(t+h) -\lambda y(t) \geq 0 $ is given by
  \begin{equation}\label{eq:positivelambdainequality}
    y(t)= |\lambda |^{\frac{t}{h}}\mu(t) +\sum_{s=1}^{\lfloor t \rfloor_h }  \lambda ^{s-1}   p(s-sh),
  \end{equation}
  where $p$ any non-negative real valued function and $\mu \in \mathbb{AP}_h $.
\end{theorem}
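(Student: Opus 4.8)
The plan is to follow the proof of the preceding theorem for $\lambda>0$ almost verbatim, isolating the single step where the sign of $\lambda$ changes the conclusion, namely the form of the homogeneous solution. The whole argument rests on the superposition principle already invoked in the introduction: I would first pass from the inequality to an associated nonhomogeneous equation, solve that equation completely, and then read off which of its solutions satisfy the inequality.

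First I would convert the inequality into an equation. Given any $y$ satisfying \eqref{eq:horderinequality}, define $p(t):=y(t+h)-\lambda y(t)$; by hypothesis $p$ is a non-negative real-valued function, and $y$ solves the nonhomogeneous equation $y(t+h)-\lambda y(t)=p(t)$. Conversely, any solution of this equation for some non-negative $p$ satisfies the inequality, since then $y(t+h)-\lambda y(t)=p(t)\geq 0$. Thus describing the \emph{general} solution of the inequality is the same as describing the general solution of the nonhomogeneous equation as $p$ ranges over all non-negative functions.

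Next I would determine the homogeneous part. For $y(t+h)=\lambda y(t)$ with $\lambda<0$ the naive factor $\lambda^{t/h}$ is not real-valued, so I would use the ansatz $y(t)=|\lambda|^{t/h}\mu(t)$. Writing $\lambda=-|\lambda|$ and substituting gives $y(t+h)=|\lambda|\,|\lambda|^{t/h}\mu(t+h)$ while $\lambda y(t)=-|\lambda|\,|\lambda|^{t/h}\mu(t)$, so the homogeneous equation is equivalent to $\mu(t+h)=-\mu(t)$, i.e. $\mu\in\mathbb{AP}_h$. This is exactly where the negative sign of $\lambda$ forces antiperiodicity in place of the periodicity obtained for $\lambda>0$. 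To see that the ansatz captures \emph{all} homogeneous solutions and not merely one, I would conversely start from an arbitrary homogeneous solution $y_h$, set $\mu(t):=y_h(t)/|\lambda|^{t/h}$ (well-defined since $|\lambda|^{t/h}>0$), and verify directly that $\mu\in\mathbb{AP}_h$.

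Finally I would supply the particular solution by invoking the earlier theorem on $y(t+h)-\lambda y(t)=f(t)$ with $f=p$, which yields $y_p(t)=\sum_{s=1}^{\lfloor t\rfloor_h}\lambda^{s-1}p(t-sh)$; note that $s$ is an integer, so $\lambda^{s-1}$ is real even though $\lambda<0$, and the terms merely alternate in sign. Adding the homogeneous and particular parts produces the claimed form \eqref{eq:positivelambdainequality}. I expect the main obstacle to be the bookkeeping that makes the word \emph{general} precise in both directions: showing that every inequality solution arises this way (via the $p$ defined above together with the completeness of the homogeneous ansatz) and that every function of the stated form is an inequality solution. The antiperiodicity computation in the homogeneous step is the conceptual heart of the argument; the remainder is the routine superposition reasoning already carried out for $\lambda>0$.
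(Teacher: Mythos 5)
Your proposal is correct and follows essentially the same approach the paper itself uses: the paper states this theorem without proof, but its proof of the preceding ($\lambda>0$) theorem is the identical superposition argument --- replace the inequality by the equation $y(t+h)-\lambda y(t)=p(t)$ with $p\geq 0$ non-negative, then add the general homogeneous solution to the particular solution from the earlier summation formula. Your explicit verification that the homogeneous solutions for $\lambda<0$ are exactly $|\lambda|^{t/h}\mu(t)$ with $\mu\in\mathbb{AP}_h$ (both directions) supplies the one detail the paper leaves implicit, and you also silently correct the paper's apparent typo $p(s-sh)$ to $p(t-sh)$.
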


\begin{theorem}
  Let $h, \lambda > 0 $. The general solution of the difference inequality $ y(t+h) -\lambda y(t) \leq 0 $ is given by

  \begin{equation}\label{eq:positivelambdainequality}
    y(t)= \lambda^{\frac{t}{h}}\mu(t) +\sum_{s=1}^{\lfloor t \rfloor_h }   \lambda ^{s-1} q(s-sh),
  \end{equation}
  where $q$ is any non-positive real-valued function and $\mu \in \mathbb{P}_h $.
\end{theorem}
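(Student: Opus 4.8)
The plan is to mirror the proof of the corresponding ``$\geq 0$'' theorem by passing from the inequality to an associated nonhomogeneous equation with a sign-constrained forcing term, and then invoking the superposition principle together with the particular-solution formula (\ref{eq:pshorder}). The whole argument splits into a necessity direction (every solution of the inequality has the stated form) and a sufficiency direction (every function of that form solves the inequality).

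First I would establish the equivalence between the inequality and a family of equations. Given any $y$ satisfying $y(t+h)-\lambda y(t)\leq 0$, define $q(t):=y(t+h)-\lambda y(t)$; by hypothesis $q$ is a non-positive real-valued function, and $y$ solves the nonhomogeneous equation $y(t+h)-\lambda y(t)=q(t)$. Conversely, every non-positive $q$ produces, through this equation, a $y$ satisfying the inequality. Thus describing the solution set of the inequality amounts to describing, as $q$ ranges over all non-positive functions, the solution sets of $y(t+h)-\lambda y(t)=q(t)$.

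Next I would apply the superposition principle. For each fixed $q$, the general solution of $y(t+h)-\lambda y(t)=q(t)$ is the sum of the general solution of the homogeneous equation $y(t+h)-\lambda y(t)=0$ and any particular solution. Because $\lambda>0$, the homogeneous equation $y(t+h)=\lambda y(t)$ has general solution $\lambda^{t/h}\mu(t)$ with $\mu\in\mathbb{P}_h$: indeed $\lambda^{t/h}$ is a real positive solution, and the requirement $\lambda^{(t+h)/h}\mu(t+h)=\lambda\,\lambda^{t/h}\mu(t)$ forces precisely $\mu(t+h)=\mu(t)$. This is the step where the sign of $\lambda$ is decisive: for $\lambda>0$ the periodicity class is $\mathbb{P}_h$, whereas for $\lambda<0$ one would instead obtain $\mathbb{AP}_h$, as in the preceding theorem. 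A particular solution is supplied directly by (\ref{eq:pshorder}) with $f=q$, namely $y_p(t)=\sum_{s=1}^{\lfloor t\rfloor_h}\lambda^{s-1}q(t-sh)$.

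Combining these gives the claimed form $y(t)=\lambda^{t/h}\mu(t)+\sum_{s=1}^{\lfloor t\rfloor_h}\lambda^{s-1}q(t-sh)$. I expect the only delicate point to be the necessity direction: one must check both that the $q$ read off from a given $y$ is genuinely non-positive (immediate from the inequality) and that applying $E^h-\lambda I$ to the particular-solution formula returns exactly that same $q$, which is precisely the content of the theorem behind (\ref{eq:pshorder}). The sufficiency direction is then a routine substitution: the homogeneous part is annihilated by $E^h-\lambda I$ and the particular part returns $q(t)\leq 0$, so the inequality holds. Since $\lambda>0$ makes every weight $\lambda^{s-1}$ positive, the sign of the forcing term is transmitted faithfully, confirming the non-positive constraint on $q$ is the correct one.
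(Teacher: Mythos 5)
Your proposal is correct and takes essentially the same route as the paper: the paper's own (sketched) proof, given for the companion $\geq 0$ theorem and implicitly reused here, likewise converts the inequality into a nonhomogeneous equation $y(t+h)-\lambda y(t)=q(t)$ with $q$ non-positive and then invokes the superposition principle with the homogeneous solution $\lambda^{t/h}\mu(t)$, $\mu \in \mathbb{P}_h$, and the particular solution from (\ref{eq:pshorder}). In fact your write-up is more complete than the paper's two-line sketch, since you verify both directions and justify the $\mathbb{P}_h$ factor, and your summand $q(t-sh)$ corrects what appears to be a typo ($q(s-sh)$) in the stated formula.
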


\begin{theorem}
  Let $h>0, \lambda < 0 $. The general solution of the difference inequality $ y(t+h) -\lambda y(t) \leq 0 $ is given by

  \begin{equation}\label{eq:positivelambdainequality}
    y(t)= |\lambda |^{\frac{t}{h}}\mu(t) +\sum_{s=1}^{\lfloor t \rfloor_h } \lambda ^{s-1}   q(s-sh),
  \end{equation}
  where $q$ is any non-positive real-valued function and $\mu \in \mathbb{AP}_h $.
\end{theorem}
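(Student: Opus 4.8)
The plan is to follow the template of the three preceding inequality theorems, reducing the inequality $y(t+h)-\lambda y(t)\le 0$ to a family of nonhomogeneous equations and then invoking the superposition principle; the only genuinely new ingredient is the structure of the homogeneous solution when $\lambda<0$. First I would note that a function $y$ satisfies the inequality if and only if the function $q(t):=y(t+h)-\lambda y(t)$ is non-positive for every $t$. Hence solving the inequality is equivalent to solving the nonhomogeneous equation $y(t+h)-\lambda y(t)=q(t)$ as $q$ ranges over all non-positive real-valued functions, every such $q$ being admissible. This sets up a correspondence between solutions of the inequality and pairs consisting of a non-positive $q$ together with a solution of the associated equation.

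Next, by the superposition principle the general solution of $y(t+h)-\lambda y(t)=q(t)$ is $y=y_h+y_p$, where $y_h$ solves the homogeneous equation $y(t+h)=\lambda y(t)$ and $y_p$ is a particular solution. For $y_p$ I would quote the $h$-order antidifference formula (\ref{eq:pshorder}) directly, obtaining $y_p(t)=\sum_{s=1}^{\lfloor t\rfloor_h}\lambda^{s-1}q(t-sh)$; since $s$ ranges over integers, the factor $\lambda^{s-1}$ is a well-defined real number even though $\lambda<0$. This accounts for the summation term in the asserted formula.

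The heart of the matter is the homogeneous part. I would claim that for $\lambda<0$ the general solution of $y(t+h)=\lambda y(t)$ is exactly $y_h(t)=|\lambda|^{t/h}\mu(t)$ with $\mu\in\mathbb{AP}_h$. One direction is a direct check: if $\mu(t+h)=-\mu(t)$, then $y_h(t+h)=|\lambda|^{(t+h)/h}\mu(t+h)=-|\lambda|\,|\lambda|^{t/h}\mu(t)=\lambda\,y_h(t)$, using $\lambda=-|\lambda|$. For the converse, given any homogeneous solution $y_h$ I would set $\mu(t):=y_h(t)/|\lambda|^{t/h}$ and compute $\mu(t+h)=y_h(t+h)/\bigl(|\lambda|\,|\lambda|^{t/h}\bigr)=(\lambda/|\lambda|)\,\mu(t)=-\mu(t)$, so that $\mu\in\mathbb{AP}_h$. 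Assembling $y=y_h+y_p$ then yields the stated formula.

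The step I expect to require the most care is this last one. It is precisely the negativity of $\lambda$ that forces $\mu$ to be antiperiodic rather than periodic (contrast the $\lambda>0$ theorem, where $\mu\in\mathbb{P}_h$), so the argument must make explicit that the sign change $\lambda=-|\lambda|$ is absorbed into the antiperiodicity relation $\mu(t+h)=-\mu(t)$, and that writing $|\lambda|^{t/h}$ rather than $\lambda^{t/h}$ keeps the base positive so that the real exponential is well defined. Everything else is routine verification inherited from the earlier results.
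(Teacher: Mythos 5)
Your proof is correct and takes essentially the same approach the paper intends: the paper proves only the first inequality theorem (reduce the inequality to a nonhomogeneous equation whose right-hand side is sign-constrained, then apply superposition with the particular-solution formula (\ref{eq:pshorder})) and leaves this $\lambda<0$ variant unproved by analogy, and your argument supplies exactly the missing detail—the two-directional check that the homogeneous solutions are precisely $|\lambda|^{t/h}\mu(t)$ with $\mu\in\mathbb{AP}_h$. Note also that your summand $q(t-sh)$ implicitly corrects what is evidently a typo in the paper's displayed formula, which reads $q(s-sh)$.
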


\section{A Convolution representation of antidifference}

In differential equations,  the  solution for some initial value problem for nonhomogeneous equations can be written as a convolution. For example, the solution of the initial value problem for the nonhomogeneous linear difference equation
$$ y'(t)+ \omega y(t)=f(t),\quad y(0)= 0   $$
 is given by the formula
$$   y(t)= \int_{0}^{t}f(s)e^{-\omega(t-s)}ds :=f(t) \ast e^{-\omega t}. $$
\begin{theorem}
For the nonhomogeneous difference equation (\ref{eq:1stordnonhomogeneous}), the particular solution can be written as an antidifference of $f$ given by some convolution as
  $$ y_p(t)= G(t,\lambda) \ast f(t), $$
  where
  $$ G(t;\lambda)= \sum_{s=1}^{\lfloor t\rfloor} \lambda ^{s-1}\delta(t-s)  $$
\end{theorem}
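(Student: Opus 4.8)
The plan is to verify that the proposed convolution reproduces the particular solution already obtained in Theorem~\ref{eq:Eyeopener1}, namely the expression~(\ref{eq:ps1storder}). First I would fix the meaning of the product $G(t;\lambda)\ast f(t)$ by the (continuous, one–sided) convolution integral
\begin{equation*}
  (G\ast f)(t) = \int_{-\infty}^{\infty} G(\tau;\lambda)\, f(t-\tau)\, d\tau,
\end{equation*}
in exact analogy with the motivating example, where the solution of $y'+\omega y=f$ is written with the causal kernel $e^{-\omega t}$. Substituting the distributional kernel $G(\tau;\lambda)=\sum_{s=1}^{\lfloor t\rfloor}\lambda^{s-1}\delta(\tau-s)$ and using linearity to interchange the finite sum with the integral, the $s$-th term becomes $\lambda^{s-1}\int_{-\infty}^{\infty}\delta(\tau-s)\, f(t-\tau)\,d\tau$.

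Next I would apply the sifting property of the Dirac delta, $\int \delta(\tau-s)\, g(\tau)\, d\tau = g(s)$, to each term with $g(\tau)=f(t-\tau)$. This collapses the $s$-th integral to $f(t-s)$, so that the convolution reduces to
\begin{equation*}
  (G\ast f)(t) = \sum_{s=1}^{\lfloor t\rfloor}\lambda^{s-1} f(t-s),
\end{equation*}
which is precisely the right-hand side of~(\ref{eq:ps1storder}). Since Theorem~\ref{eq:Eyeopener1} already establishes that this expression is a particular solution of~(\ref{eq:1stordnonhomogeneous}), the asserted identity $y_p(t)=G(t;\lambda)\ast f(t)$ follows immediately.

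The step that requires the most care is the fact that the number of delta spikes in $G$ --- equivalently, the upper summation limit $\lfloor t\rfloor$ --- depends on the evaluation point $t$, so $G$ is not a fixed convolution kernel in the classical sense. I would resolve this in either of two equivalent ways. The direct reading takes the limit $\lfloor t\rfloor$ as part of the definition of $G$ and simply observes that only the spikes at $\tau=1,2,\dots,\lfloor t\rfloor$ enter the integral, so the finite sum above is exactly what the sifting property produces. The cleaner reading replaces $G$ by the $t$-independent causal kernel $\tilde G(\tau;\lambda)=\sum_{s=1}^{\infty}\lambda^{s-1}\delta(\tau-s)$, adopts the causality convention $f(u)=0$ for $u<0$ (matching the one-sided convolution of the motivating continuous example), and notes that $f(t-s)=0$ whenever $s>t$, i.e.\ for $s\ge\lfloor t\rfloor+1$; the infinite sum then truncates automatically at $s=\lfloor t\rfloor$ and yields the same finite expression. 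Under either interpretation the statement reduces to the sifting property together with Theorem~\ref{eq:Eyeopener1}, so no further obstacle remains.
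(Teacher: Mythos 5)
Your proof is correct and takes essentially the same route as the paper's: expand the convolution integral, interchange the finite sum of Dirac deltas with the integral, apply the sifting property to obtain $\sum_{s=1}^{\lfloor t\rfloor}\lambda^{s-1}f(t-s)$, and identify this with~(\ref{eq:ps1storder}) from Theorem~\ref{eq:Eyeopener1}. Your closing remark about the $t$-dependence of the upper summation limit in $G$ addresses a genuine imprecision that the paper's own proof passes over silently, and either of your two resolutions is acceptable.
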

\begin{proof}
\begin{align*}
  G(t;\lambda) \ast f(t)  = \int_{-\infty}^{\infty}G(t-r;\lambda)f(r)dr
   & =\int_{-\infty}^{\infty}   \sum_{s=1}^{\lfloor t\rfloor}\lambda ^{s-1}\delta(t-r-s)f(r)dr \\
   & = \sum_{s=1}^{\lfloor t\rfloor}\lambda^{s-1} \int_{-\infty}^{\infty} \delta(t-r-s)f(r)dr \\
   &= \sum_{s=1}^{\lfloor t\rfloor} \lambda^{s-1}f(t-s)
\end{align*}
\end{proof}

\section{ Discussion of the results and Conclusions}
\begin{itemize}
  \item  A new explicit formula for calculating the antidifference of a given function $f(t)$ is presented in this study. The author is aware that there are various ways to determine the antidifference of a certain function. Some of them focus on common functions such as polynomials, exponentials, and so forth. Additional available formulas are given in terms of infinite series. This requires  knowledge of the interval of convergence of the series as well as the exact value to which the infinite series converges. The current formula is an indefinite sum with  a variable upper limit of summation that is equal to the floor function. This means that, to evaluate the antidifference at a specific point $t$, we have to consider some summation which has $\lfloor t\rfloor$ number of terms. The terms in the summation are just the shift of the given function $f(t)$.  The summation is defined whenever all the terms $f(t-s),\quad  s=1,2,3,..\lfloor  t \rfloor $ are defined .  Therefore tha antidifference  dunction is  evaluated at each ponit  $t$ with a sum with number of terms equal to $\lfloor t \rfloor$ as follows :
$$ \Delta ^{-1}f(t) =\begin{cases}
                        & 0, \quad  \mbox{ if }\quad  -\infty < t <1,  \\
                        & f(t-1), \quad  \mbox{if }\quad  1 \leq t < 2,  \\
                        & f(t-1)+f(t-2),\quad  \mbox{ if } \quad  2 \leq  t < 3,  \\
                        &\vdotswithin{ = } \\[-1ex]
                        & \sum_{s=1}^{n}f(t-s),\quad \mbox{ if }\quad  n \leq t <  n+1.
  \end{cases} $$

 \item  We have considered a forward difference  with positive shifts ($h>0)$. So the interval of calculation for the antidifferece is to the right, while the function is set to zero  for $t < 1 $.. For the negative shift $h<0$, the function may be zero for some interval of the form $[a,\infty ) $ and the antidifference is calculated on the complement of this set.  The case of the back difference operator can be treated with some adjustment. For example, consider the difference equation
    $$\nabla y(t)= y(t)-y(t-1)= f(t)$$
One method is shifting the equation so that it becomes
$$ y(t+1)-y(t) =f(t+1) .$$
Now we can apply the techniques that we have developed for the forward difference operator. Therefore, the backward antidifference operator yields
$$ \nabla ^{-1} f(t) =  \Delta ^{-1} f(t+1) =   \sum_{s=1}^{\lfloor t\rfloor} f(t+1-s) $$
For power function $f(t)= t ^n $, using the notation (\ref{eq:ttoninrisingfactorials}), we may write
$$ \nabla ^{-1} t^n = \nabla ^{-1} \sum_{k=0}^{n} (-1)^{n-k} S(n,k)t^{(k)}= \sum_{k=0}^{n} (-1)^{n-k} \frac{S(n,k)}{k+1}t^{(k+1)}  $$

\item  The current result is applicable in calculating a particular solution of a nonhomogeneous linear difference equations and linear difference inequalities. The indefinite  sums of  some known functions  that appear common textbooks  differ from the result calculated  according to our summation  formula by a term that can be absorbed in the  general solution of the corresponding homogeneous difference equation.
\item The result can be extended to some nonhomogeneous linear difference equations that are not discussed here.
 For example, higher order difference equations with non irreducible difference operator $\Phi(E)$, like $y(t+2)+y(t+1)+y(t)=0$. Some other consideration may yield some summation identities.
  \begin{example}
  Consider the difference equation
$$ y(t+2)-4y(t)=f(t).$$
We can find its particular solution in two different ways: either by taking $h=2$ and $\lambda= 4 $, or by factorising the difference operator $E^2-4I=(E-2I)(E+2I)$ and applying two indefinite summations. Consequently we get the identity:
$$ \sum_{s_2=1}^{\lfloor t \rfloor} \sum_{s_1=1}^{\lfloor t\rfloor- s_2} (-1)^{s_1-1}2^{s_1+s_2}f(t-s_1-s_2)= \sum_{s=1}^{\lfloor t\rfloor_2} 4^sf(t-2s),$$
for any function $f: \mathbb{R}\rightarrow \mathbb{R} $.
  \end{example}

   \begin{example}
  Consider the difference equation
$$ y(t+2)+y(t)=f(t).$$
We can find its particular solution in two different ways: either by taking $h=2$ and $\lambda= -1 $, or by factorising the difference operator $E^2+ I=(E-iI)(E+iI)$ and applying two indefinite summations. Consequently, we arrive at the identity:
$$ \sum_{s_2=1}^{\lfloor t \rfloor} \sum_{s_1=1}^{\lfloor t\rfloor- s_2} (-1)^{s_1}(i)^{s_1+s_2}f(t-s_1-s_2)= \sum_{s=1}^{\lfloor t\rfloor_2} (-1)^{s-1}f(t-2s).$$
for any function $f: \mathbb{R}\rightarrow \mathbb{R} $.
  \end{example}

\item The result can be extended to nonhomogeneous difference equation with the difference operator $\Phi (E)$, where $\Phi$ is a polynomial ineducable over $\mathbb{R}$.
    \begin{example}
    Let
    $$y(t+2)+y(t+1)+y(t)=f(t). $$
Then the particular solution is given by
$$   y(t)= \sum_{s_2=1}^{\lfloor t\rfloor} \sum_{s_1}^{\lfloor t\rfloor-s_2}\omega^{s_1-s_2}f(t-s_1-s_2)  $$
\end{example}

\end{itemize}
\section*{Statements of Declarations:}
\subsection*{Conflict of Interests}
The author declare that there is no conflict of interests regarding the publication of this paper.

\subsection*{Acknowledgment}
The author is thankful to the anonymous reviewers for their constructive and valuable suggestions.
\subsection{Author's contribution} The corresponding author is  the sole contributor of the whole content of this work.
\subsection{Data Availability} There are no external data used in this paper other than the reference materials.

\subsection*{Funding} This Research work is not funded by any institution or individuals.


\begin{thebibliography}{99}
\bibitem{HS}  Ames J. Hurt and Anthony J. Schaeffer, Critical Linear Difference Equation: A Study in Pathology,      Journal of Mathematical Analysis And Applications 33, 408-424(1971).
    
\bibitem{CHR} C. H. Richardson, An Introduction to the calculus of Finite Differences, D. Van Nostrand Company, Inc. 1954.

\bibitem{GF} Geovanni Feraro, The Rise and Development of the Theory of Series up to the Early 1820s, \copyright Springer Science+ Business Media, LLC, 2008.

\bibitem{GR}Geovanni Romeo, Mathematics for Dynamic economic model, Elements of Numerical Mathematical Economic with EXCEL, Academic Press 2020.
    
 \bibitem{HG} Hermann H. Goldstine, A history of numerical analysis from the 16th through the 19th century, \copyright Springer-Verlag, New York Inc., 1977.
    
\bibitem{FH} Harris, Frank E.. Mathematics for Physical Science and Engineering: Symbolic Computing Applications in Maple and Mathematica. Netherlands, Elsevier Science, 2014.
    
\bibitem{KM} Kenneth S. Miller,  An introduction to Calculus of Finite Differences,  Henry Holt and Company New York,  1960.
    
\bibitem{KN}  Klaus Neusser,  Difference Equations for Economists, 2016.  
    
\bibitem{MT} L.M. Milne Thomson, The calculus of Finite Differences, MacMilan and CO. Ltd. 1933.

\bibitem{LB} Louis Brand, Differential and Difference Equations, John Weley \& Sons Inc., 1966.

\bibitem{MS}  M\"{u}ller, M., \& Schleicher D,  How to Add a Noninteger Number of Terms: From Axioms to New Identities, The American Mathematical Monthly, 118(2), 136–152, (2011).
    
 \bibitem{MS1}  M\"{u}ller, M., \& Schleicher D, How to add a non-integer number of terms, and how to produce
unusual infinite summations,     Journal of Computational and Applied Mathematics 178 (2005) 347– 360

\bibitem{RM} Richar M. Meyer Essential Mathematics for Applied fields, Springer Verlang New York Inc., 1979.

\bibitem{SG} Samuel Goldberg, Introduction to difference Equations, New York: John Wiley \& Sons, Inc., 1958.

\bibitem{KP} Walter G. Kelly, Allan C. Peterson, Difference Equations, second ed.Academic  Press,2001.

\bibitem {WP} Wikipedia, Indefinite sum, \url{https://en.wikipedia.org/wiki/Indefinite_sum}.
\bibitem{YM} Yiu-Kwong Man, On Computing Closed forms for indefinite summations, J. Symbolic computation,16,355-376, (1993).
\end{thebibliography}
\end{document}